\documentclass[11pt,a4paper]{article}
\pagestyle{myheadings}\markboth{} {}

\oddsidemargin .5cm \evensidemargin .5cm \textwidth=15cm
\textheight=21truecm \unitlength=1cm
\parskip 3mm
\baselineskip 15pt

\usepackage{amsmath,amssymb,latexsym,graphics,epsfig}
\usepackage{hyperref}
\usepackage{color}
\usepackage{amsthm}
\usepackage{graphicx,url}
\usepackage{amsopn}
\usepackage{amssymb}
\usepackage[english]{babel}
\usepackage{soul}

\newtheorem{proposition}{Proposition}[section]
\newtheorem{theorem}{Theorem}[section]
\newtheorem{lemma}{Lemma}[section]

\newtheorem{definition}{Definition}
\newtheorem*{example*}{Example}

\newcommand\blfootnote[1]{%
	\begingroup
	\renewcommand\thefootnote{}\footnote{#1}%
	\addtocounter{footnote}{-1}%
	\endgroup
}

\DeclareMathOperator{\Circ}{circ}

\DeclareMathOperator{\spec}{sp}

\DeclareMathOperator{\Cay}{Cay}
\DeclareMathOperator{\diag}{diag}

\DeclareMathOperator{\Irep}{Irep}
\DeclareMathOperator{\rank}{rank}
\DeclareMathOperator{\Stab}{Stab}
\DeclareMathOperator{\Sym}{Sym}





\def\S{\mbox{\boldmath $S$}}

\def\Par{\pi}

\def\G{\Gamma}

\begin{document}
	
	\title{Spectra and eigenspaces from regular partitions\\ of Cayley (di)graphs of permutation groups
		\thanks{This research has been partially supported by 
			AGAUR from the Catalan Government under project 2017SGR1087 and by MICINN from the Spanish Government under project PGC2018-095471-B-I00. The research of the first author has also been supported by MICINN from the Spanish Government under project MTM2017-83271-R.}}
			
	\author{C. Dalf\'o\\
		{\small Departament de Matem\`atica, Universitat de Lleida} \\
		{\small Igualada (Barcelona), Catalonia} \\
				\vspace{.25cm}
		{\small {\tt{cristina.dalfo@matematica.udl.cat}}}\\
	M. A. Fiol\\
    	{\small Departament de Matem\`atiques, Universitat Polit\`ecnica de Catalunya} \\
    	{\small Barcelona Graduate School} \\
    	{\small Barcelona, Catalonia} \\
    	{\small {\tt{miguel.angel.fiol@upc.edu}}}
    }
	\date{}

	\maketitle
	
	\blfootnote{
		\begin{minipage}[l]{0.3\textwidth} \includegraphics[trim=10cm 6cm 10cm 5cm,clip,scale=0.15]{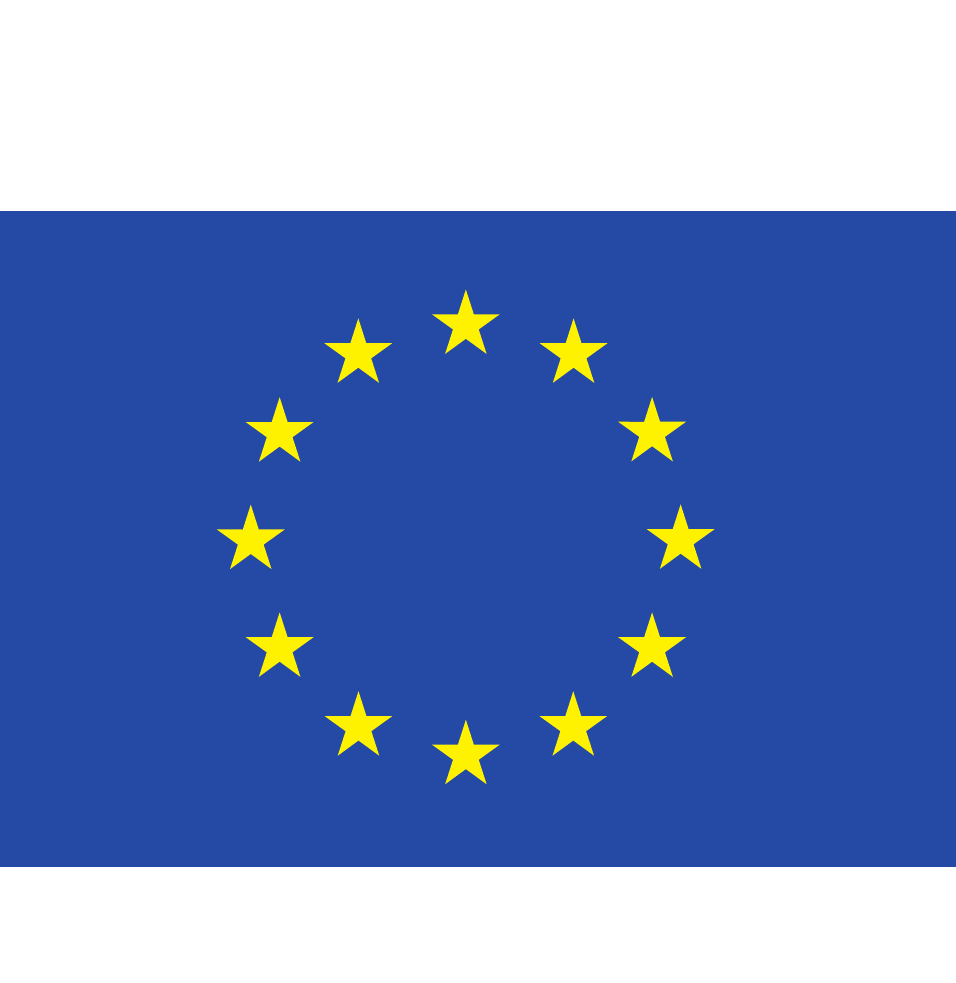} \end{minipage}  \hspace{-2cm} \begin{minipage}[l][1cm]{0.79\textwidth}
			The research of the first author has also received funding from the European Union's Horizon 2020 research and innovation programme under the Marie Sk\l{}odowska-Curie grant agreement No 734922.
	\end{minipage}}
	
\begin{abstract}
	In this paper, we present a method to obtain regular (or equitable) partitions of Cayley (di)graphs (that is, graphs, digraphs, or mixed graphs) of permutation groups on $n$ letters. We prove that every partition of the number $n$ gives rise to a regular partition of the Cayley graph.  By using representation theory, we also obtain the complete spectra and the eigenspaces of the corresponding quotient (di)graphs. More precisely, we provide a method to find all the eigenvalues and eigenvectors of such (di)graphs, 
	based on 
	their irreducible representations. As examples, we apply this method to the pancake graphs $P(n)$ and to a recent known family of mixed graphs $\G(d,n,r)$ (having edges with and without direction). As a byproduct, the existence of perfect codes in $P(n)$ allows us to give a lower bound for the multiplicity of its eigenvalue $-1$.
\end{abstract}
	
	\noindent{\em Mathematics Subject Classifications:} 05C50. \\
	\noindent{\em Keywords:} Lifted (di)graph, regular partition, spectrum, symmetric group, representation theory, pancake graph, new mixed graph.

\section{Preliminaries}

In this paper, we study the eigenvalues and eigenvectors of Cayley (di)graphs Cay$(G,S)$ (in these, we include graphs, digraphs, and mixed graphs), where $G$ is a subgroup of the symmetric group $S_n=\Sym(n)$, and $S$ is the generating set given by some permutations $\pi_1,\pi_2,\ldots,\pi_k$.

Throughout this paper, $\Gamma=(V,E)$ denotes a digraph, which as said before can be a graph, digraph or mixed graph, with vertex set $V$ and arc set $E$. An arc from vertex $u$ to vertex $v$ is denoted by either $uv$ or $u\rightarrow v$. The set of vertices adjacent from a vertex $u\in V$ is denoted by $\Gamma^+(u)=\{v\in V: u\rightarrow v\}$. We allow {\em loops} (that is, arcs from a vertex to itself), and {\em multiple arcs}. A {\em digon} is a pair of opposite arcs, $uv$ and $vu$, forming an edge, and is denoted by $u\sim v$. So from now on, and without loss of generality, we refer to $\Gamma$ as a digraph, unless stated otherwise.
In particular if $\G$  contains both edges and arcs, it is usually referred to as a {\em mixed} (or {\em partially directed\/}) graph.
For more details, see the comprehensive survey of Miller and \v{S}ir\'a\v{n} \cite{ms13}.

If $\G$ has adjacency matrix $A$, its spectrum
$$
\spec \Gamma=\spec A =\{[\lambda_0]^{m_0},[\lambda_1]^{m_1},\ldots,[\lambda_d]^{m_d}\},
$$
 is constituted by the (possibly complex) distinct eigenvalues 
with the corresponding algebraic multiplicities $m_i$, for $i\in [n]=\{1,\dots,n\}$.


\subsection{Regular partitions and their spectra}

\label{sec:reg-part}
Let $\G=(V,E)$ be a digraph with adjacency matrix $A$. A partition $\Par=(V_1,\ldots,
V_m)$ of its vertex set $V$ is called {\em regular} (or {\em equitable})
whenever, for any $i,j=1,\ldots,m$, the {\em intersection numbers}
$b_{ij}(u)=|\G^+(u)\cap V_j|$, where $u\in V_i$, do not depend on the vertex $u$ but only on the subsets (usually called {\em classes} or {\em
cells}) $V_i$ and $V_j$. In this case, such numbers are simply written as $b_{ij}$, and the $m\times m$ matrix $B=(b_{ij})$ is referred to as the {\em quotient matrix} of $A$ with respect to $\Par$. This is also represented by the {\em quotient (weighted) digraph} $\pi(\G)$ (associated with the partition $\pi$),
with vertices representing the cells, and there is an arc with weight $b_{ij}$ from vertex
$V_i$ to vertex $V_j$ if and only if $b_{ij}\neq 0$.

The {\em characteristic matrix} of a partition $\Par$ is the $n\times m$ matrix
$\S=(s_{ui})$ whose $i$-th column is the characteristic vector of $V_i$, that is, $s_{ui}=1$ if $u\in V_i$, and $s_{ui}=0$ otherwise. In terms of this matrix, we have the following characterization of regular partitions and their spectra (see Godsil \cite{g93}).

\begin{lemma}[\cite{g93}]
	\label{g93}
	Let $\G=(V,E)$ be a digraph with adjacency matrix $A$, and vertex partition $\Par$
	with characteristic matrix $S$. 
\begin{itemize}
\item[$(i)$]
The partition  $\Par$ is regular if and only if there
exists an $m\times m$ matrix $C$ such that $SC=AS$. Moreover, $C=B$, the quotient matrix of $A$ with respect to $\Par$.
\item[$(ii)$]
If $\Par$ is regular and $x$ is an eigenvector of $B$, then $Sx$ is an eigenvector of $A$. Consequently, the spectrum of $\pi(\G)$ is contained in the spectrum of $\G$, that is, $\spec B\subseteq \spec A$.
\end{itemize}
\end{lemma}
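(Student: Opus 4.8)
The plan is to establish both parts by direct computation with the matrices $A$, $S$, and $B$, reading each matrix product entrywise in terms of the combinatorial data of the partition.

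For part $(i)$, I would first compute the $(u,j)$ entry of $AS$ for a vertex $u \in V_i$. Since the $j$-th column of $S$ is the characteristic vector of $V_j$, we have $(AS)_{uj} = \sum_{v \in V} A_{uv} s_{vj} = \sum_{v \in V_j} A_{uv} = |\G^+(u) \cap V_j| = b_{ij}(u)$, the intersection number. On the other side, for any $m \times m$ matrix $C$ and any $u \in V_i$, the $u$-th row of $S$ has a single nonzero entry $s_{ui} = 1$, so $(SC)_{uj} = \sum_k s_{uk} c_{kj} = c_{ij}$, which does not depend on $u$. Comparing the two expressions, a matrix $C$ satisfies $SC = AS$ if and only if $c_{ij} = b_{ij}(u)$ for every $u \in V_i$ and all $i,j$. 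Such a $C$ exists precisely when each $b_{ij}(u)$ is independent of the choice of $u \in V_i$ --- which is the definition of regularity --- and when it exists it is forced to equal $B$.

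For part $(ii)$, assuming $\Par$ is regular, part $(i)$ yields $AS = SB$. If $x$ satisfies $Bx = \lambda x$, then $A(Sx) = (AS)x = (SB)x = S(Bx) = \lambda(Sx)$, so $Sx$ lies in the $\lambda$-eigenspace of $A$. The only point requiring care --- and the step I would flag as the main obstacle --- is ruling out $Sx = 0$, since otherwise the conclusion would be vacuous. Here I would note that the columns of $S$ have pairwise disjoint supports and are each nonzero, because the cells $V_i$ are nonempty and partition $V$; hence they are linearly independent and $S$ has full column rank $m$. The map $x \mapsto Sx$ is therefore injective, so $x \neq 0$ forces $Sx \neq 0$, and $Sx$ is a genuine eigenvector. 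Since this produces, for each eigenvalue $\lambda$ of $B$, a nonzero $\lambda$-eigenvector of $A$, we conclude $\spec B \subseteq \spec A$.
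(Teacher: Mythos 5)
Your proof is correct, and since the paper gives no proof of this lemma (it is quoted from Godsil's \emph{Algebraic Combinatorics}), the right comparison is with the standard argument there, which yours matches: the entrywise identification $(AS)_{uj}=b_{ij}(u)$ versus $(SC)_{uj}=c_{ij}$ for part $(i)$, and the eigenvector lifting $A(Sx)=SBx=\lambda Sx$ together with the full column rank of $S$ (ruling out $Sx=0$) for part $(ii)$. Your explicit attention to the injectivity of $S$ is exactly the point that makes the containment $\spec B\subseteq\spec A$ non-vacuous, and it is the correct reading of the claim for digraphs, where $A$ and $B$ need not be diagonalizable, so only set containment of eigenvalues (not multiplicities) follows from this argument.
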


\subsection{Lift digraphs and their spectra}
\label{sec:sp}
Given a group $G$ with generating set $S$, a
{\em voltage assignment} of the {\em base digraph} $\G$ is a mapping $\alpha:E\rightarrow S$. The pair $(\G,\alpha)$ is often called a {\em voltage digraph}. The {\em lifted digraph} (or, simply, {\em lift})
$\Gamma^\alpha$ is the digraph with vertex set  $V(\Gamma^\alpha)=V\times G$ and arc set $E(\Gamma^\alpha)=E\times G$, where there is an arc from the vertex $(u,g)$ to the vertex $(v,h)$ if and only if $uv\in E$ and 
$h=\alpha(uv)g$.
In this case, we 
	refer to a {\em regular} lift because of the mapping $\phi: \G^{\alpha}\rightarrow \G$ defined by erasing the second coordinate (that is, $\phi(u,g)=u$ and $\phi(a,g)=a$ for every $u\in N$ and $a\in E$) is a regular ($|G|$-fold) covering, in its usual meaning in algebraic topology (see, for instance, Gross and Tucker \cite{gt77}).

As a particular case of a lifted graph, notice that the Cayley digraph $\Cay(G,S)$ can be seen as a lift of the base digraph $\G$ consisting of a vertex with $|S|$ (directed) loops, each of them having assigned, through $\alpha$, an element of $S$.

	To the pair $(\Gamma,\alpha)$, we assign the $k\times k$ {\em base matrix} $B$, a square matrix whose rows and columns are indexed by the elements of the vertex set of $\Gamma$, and whose $uv$-th element $B_{u,v}$ is determined as follows: If $a_1,\ldots,a_j$ is the set of all the arcs of $\Gamma$ emanating from $u$ and terminating at $v$ (not excluding the case $u=v$), then
	\begin{equation}
	\label{eq:Balpha} B_{u,v}=\alpha(a_1)+\cdots + \alpha(a_j),
	\end{equation}
	the sum being an element of the complex group algebra $\mathbb{C}(G)$; otherwise, we let $B_{u,v}=0$.
	Given a   unitary irreducible representation of $G$, $\rho \in \Irep(G)$, of dimension $d_\rho$, let $\rho(B)$ be the $d_\rho k\times d_\rho k$ matrix obtained from $B$ by replacing every 
	entry $B_{u,v} \in \mathbb{C}(G)$ as in \eqref{eq:Balpha} by the $d_\rho\times d_\rho$ matrix 
	\begin{equation}
	\rho(B_{u,v}) =  \left\lbrace
	\begin{array}{cc}
	\rho(\alpha(a_1)) + \cdots + \rho(\alpha(a_j)) & \mbox{if } B_{u,v}\neq0,\\
	O & \mbox{otherwise},
	\end{array}
	\right.
	\label{eq:B}
	\end{equation}
	where $O$ is the all-zero $d_\rho\times d_\rho$ matrix.

The following results from \v{S}ir\'a\v{n} and the authors \cite{dfs19} (see also \cite{dfmrs17})
allow us to compute the spectrum of a (regular) lifted digraph from its associated matrix and the irreducible representations 
of its corresponding group. For more information on representation theory, see James and Liebeck \cite{JaLi} or Burrow \cite{b93}.

\begin{theorem}[\cite{dfs19}]
	\label{theo-sp}
Let $\G=(V,E)$ be a base digraph on $k$ vertices, with a voltage assignment $\alpha$ in a group $G$, with $|G|=n$. For every irreducible representation $\rho\in \Irep G$, let $\rho(B)$ be the complex matrix whose entries are given by \eqref{eq:B}.
Then,
$$
\spec \Gamma^{\alpha} = \bigcup_{\rho\in \Irep(G)}d_\rho\cdot\spec(\rho(B)).
$$
\end{theorem}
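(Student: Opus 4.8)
The plan is to identify the adjacency matrix of the lift $\G^\alpha$ with the image of the base matrix $B$ under the left regular representation of $G$, and then to block-diagonalize that matrix by decomposing the regular representation into its irreducible constituents; the eigenvalues of $\G^\alpha$ are then read off from the diagonal blocks, which turn out to be copies of the matrices $\rho(B)$.

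First I would fix an ordering of $G$ and write the adjacency matrix $A$ of $\G^\alpha$ as a $k\times k$ array of $n\times n$ blocks indexed by the base vertices. By the definition of the lift, $(u,g)\to(v,h)$ holds precisely when $h=\alpha(a)g$ for some arc $a\colon u\to v$, so the $(u,v)$-block records left multiplication by the voltages of the arcs from $u$ to $v$; that is, it equals $L(B_{u,v})$, where $L\colon\mathbb{C}(G)\to\mathbb{C}^{n\times n}$ is the (linearly extended) left regular representation sending $s\in G$ to the permutation matrix of $g\mapsto sg$. Writing $\E_{uv}$ for the $k\times k$ elementary matrices, this is summarized as $A=\sum_{a\colon u\to v}\E_{uv}\otimes L(\alpha(a))$, which shows that $A$ is obtained from $B$ by replacing every group-algebra entry with its image under $L$, in complete analogy with the passage from $B$ to $\rho(B)$ in \eqref{eq:B}.

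Next I would use the fundamental fact that the regular representation decomposes as $L\cong\bigoplus_{\rho\in\Irep(G)}d_\rho\,\rho$, each irreducible occurring with multiplicity equal to its dimension: there is a unitary $U$ with $UL(s)U^{*}=\bigoplus_{\rho}(\I_{d_\rho}\otimes\rho(s))$ for all $s\in G$. Conjugating $A$ by $\I_k\otimes U$ applies this simultaneously to every block and yields \[(\I_k\otimes U)\,A\,(\I_k\otimes U)^{*}=\bigoplus_{\rho\in\Irep(G)}\ \sum_{a\colon u\to v}\E_{uv}\otimes\I_{d_\rho}\otimes\rho(\alpha(a)).\] On each $\rho$-summand I would then permute the tensor factors so that the multiplicity space comes first; since such a permutation is itself a unitary conjugation, it leaves the spectrum unchanged and turns the summand into $\I_{d_\rho}\otimes\bigl(\sum_{a\colon u\to v}\E_{uv}\otimes\rho(\alpha(a))\bigr)=\I_{d_\rho}\otimes\rho(B)$, i.e.\ exactly $d_\rho$ copies of $\rho(B)$.

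Finally, since conjugation preserves the characteristic polynomial, $\spec\G^\alpha=\spec A$ is the multiset union over $\rho\in\Irep(G)$ of $\spec(\rho(B))$ counted $d_\rho$ times, which is the asserted identity. A consistency check is that the total multiplicity equals $\sum_\rho d_\rho(d_\rho k)=k\sum_\rho d_\rho^{2}=kn$, the number of vertices of $\G^\alpha$. I expect the only real difficulty to be bookkeeping rather than conceptual: pinning down the left-versus-right and arc-direction conventions so that the $(u,v)$-block is genuinely $L(B_{u,v})$, and then keeping the two occurrences of $d_\rho$ distinct---the multiplicity of $\rho$ inside the regular representation versus the size of $\rho(B)$---so that each eigenvalue of $\rho(B)$ acquires exactly the factor $d_\rho$ in its multiplicity within $\spec\G^\alpha$.
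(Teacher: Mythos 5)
Your high-level strategy---realize the adjacency matrix $A$ of the lift as the entrywise image of $B$ under the regular representation and then block-diagonalize via $L\cong\bigoplus_{\rho}d_\rho\,\rho$---is a legitimate route to Theorem~\ref{theo-sp}. (For the record, the present paper does not prove this theorem; it quotes it from \cite{dfs19}, where the argument instead lifts eigenvectors of the matrices $\rho(B)$ to eigenvectors of $\Gamma^{\alpha}$ and counts them. Your block-diagonalization is the natural alternative.) However, the issue you defer as ``bookkeeping'' is exactly where your proof breaks, and it is not cosmetic. With the lift convention stated here, an arc goes from $(u,g)$ to $(v,\alpha(a)g)$, so the $(u,v)$-block of $A$ is $\sum_{a}N(\alpha(a))$, where $N(s)$ is the $0$--$1$ matrix with $N(s)_{g,h}=1$ iff $h=sg$. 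These matrices satisfy $N(s)N(t)=N(ts)$: they form an \emph{anti}-representation. Hence no single unitary $U$ can satisfy $UN(s)U^{*}=\bigoplus_{\rho}\bigl(I_{d_\rho}\otimes\rho(s)\bigr)$ for all $s\in G$ unless $G$ is abelian, since such a $U$ would force $\rho(st)=\rho(ts)$ for every $\rho\in\Irep(G)$, and faithfulness of $\bigoplus_{\rho}\rho$ then gives $st=ts$. So the decomposition step of your proof fails precisely in the case of interest, $G\le \Sym(n)$ non-abelian.

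The gap has real consequences: if you repair it by passing to $A^{\top}$, whose blocks do carry a genuine copy of the regular representation, the $k\times k$ block structure gets transposed too, and the argument yields $\bigcup_{\rho}d_\rho\,\spec\rho(B^{\mathrm{rev}})$, where $B^{\mathrm{rev}}$ is the base matrix of the \emph{reversed} base digraph; as a multiset this differs from $\bigcup_{\rho}d_\rho\,\spec\rho(B)$ in general. Concretely, let $G=S_3$ and let the base digraph be a directed triangle $u\to v\to w\to u$ with voltages $(12)$, $(123)$, $(13)$. Under the left-multiplication rule the lift is six directed $3$-cycles, with spectrum the cube roots of unity each of multiplicity $6$; but $\rho(B)^3$ has diagonal blocks $\rho\bigl((12)(123)(13)\bigr)=\rho\bigl((123)\bigr)$, so $\bigcup_{\rho}d_\rho\,\spec\rho(B)$ consists of all ninth roots of unity, each of multiplicity $2$ --- the spectrum of two directed $9$-cycles. (Your total-multiplicity check $\sum_\rho d_\rho^{2}k=nk$ cannot detect this, as both formulas have the correct size.) The formula of Theorem~\ref{theo-sp} corresponds to the convention of \cite{dfs19} and of Gross--Tucker, in which lift arcs are $(u,g)\to(v,g\alpha(a))$: then the $(u,v)$-block of $A$ is $\sum_a R(\alpha(a))$ with $R(s)_{g,h}=1$ iff $h=gs$, which \emph{is} a homomorphism unitarily equivalent to $\bigoplus_\rho d_\rho\,\rho$, and your argument goes through verbatim. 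So the missing step is to fix this convention first (for the left-multiplication lift as written, note it is isomorphic to the right-multiplication lift with voltages $\alpha(a)^{-1}$ via $(u,g)\mapsto(u,g^{-1})$, so the correct statement in that convention replaces $\rho(\alpha(a))$ by $\rho(\alpha(a)^{-1})$); only then may the regular representation be decomposed as you do.
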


%
%

The result of Theorem \ref{theo-sp} can be generalized to deal with
 the so-called relative voltage assignments and (not necessarily regular) lifts, which are defined as follows.
 Let $\Gamma=(V,E)$ be the digraph considered above, $G$ a group, and $H$ a subgroup of $G$ of index $n$. Let $G/H$ denote the set of left cosets of $H$ in $G$. Furthermore, let $\beta: E\to G$ be a mapping  defined on every arc $a\in E$. In this context, one calls $\beta$ a {\em voltage assignment in $G$ relative to $H$}, or simply a {\em relative voltage assignment}. Then, the {\em relative lift} $\Gamma^{\beta}$ has vertex set $V^{\beta}= V\times G/H$ and arc set $E^{\beta}=E\times G/H$. Incidence in the lift is given as expected: If $a$ is an arc from a vertex $u$ to a vertex $v$ in $\Gamma$, then for every left coset $J\in G/H$ there is an arc $(a,J)$ from the vertex $(u,J)$ to the vertex $(v,\beta(a)J)$ in $\Gamma^{\beta}$.
Notice that a relative voltage assignment $\beta$ in a group $G$ with  subgroup $H$ is equivalent to a 
regular voltage assignment if and only if $H$ is a normal subgroup of $G$. In such a case, the relative lift $\G^{\beta}$ admits a description in terms of ordinary voltage assignment in the factor group $G/H$, with voltage $\beta(a)H$ assigned to an arc $a\in E$ with original relative voltage $\beta(a)$.
In this context, Pavl\'ikov\'a, \v{S}ir\'a\v{n}, and the authors \cite{dfss19} proved the following result, which generalizes Theorem \ref{theo-sp} for relative voltage assignments.

\begin{theorem}[\cite{dfss19}]
	\label{theo-sp2}
\label{theo-sp2}
Let $\Gamma$ be a base digraph of order $k$ and let $\beta$ be a voltage assignment on $\Gamma$ in a group $G$ relative to a subgroup $H$ of index $n$ in $G$. Given an irreducible representation $\rho\in \Irep(G)$, let us consider the matrix $\rho(H)= \sum_{h\in H}\rho(h)$. Then,
$$
\spec \Gamma^{\beta} = \bigcup_{\rho\in \Irep(G)}\rank(\rho(H))\cdot\spec(\rho(B)),
$$
where the union must be understood for all $\rho\in \Irep(G)$ such that $\rank(\rho(H))\neq 0$.
\end{theorem}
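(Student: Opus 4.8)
The plan is to realize the adjacency matrix of the relative lift $\Gamma^\beta$ as the image of the base matrix $B$ under the permutation representation of $G$ on the cosets $G/H$, and then to decompose that permutation representation into irreducibles; the rank of $\rho(H)$ will emerge as a multiplicity via character theory.

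First I would write $A^\beta$, the adjacency matrix of $\Gamma^\beta$, explicitly. Indexing vertices by pairs $(u,J)$ with $u\in V$ and $J\in G/H$, let $\sigma\colon G\to\mathrm{Perm}(G/H)$ be the permutation representation, so that $\sigma(g)$ is the $n\times n$ permutation matrix sending the coset $J$ to $gJ$. Extending $\sigma$ linearly to the group algebra $\mathbb{C}(G)$, the incidence rule of the relative lift (an arc from $(u,J)$ to $(v,\beta(a)J)$ for each arc $a\colon u\to v$) shows that the $(u,v)$-block of $A^\beta$ is exactly $\sigma(B_{u,v})$; that is, $A^\beta=\sigma(B)$, the $kn\times kn$ matrix obtained from $B$ by replacing each entry $B_{u,v}$ with the $n\times n$ block $\sigma(B_{u,v})$.

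Next I would decompose $\sigma\cong\bigoplus_\rho m_\rho\,\rho$ into irreducibles. A single unitary change of basis $T$ on the fiber $\mathbb{C}^{G/H}$ simultaneously block-diagonalizes every $\sigma(g)$, hence every entry $\sigma(B_{u,v})$; conjugating $A^\beta$ by $I_k\otimes T$ leaves the base-vertex factor untouched, so it acts uniformly across all $k^2$ blocks. Restricting to the $\rho$-isotypic part of the fiber and reordering the tensor factors turns the contribution of each $\rho$ into $I_{m_\rho}\otimes\rho(B)$, where $\rho(B)$ is precisely the matrix defined in \eqref{eq:B} (its $(u,v)$-block being $\sum_{a\colon u\to v}\rho(\beta(a))=\rho(B_{u,v})$). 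Taking spectra gives $\spec\Gamma^\beta=\bigcup_\rho m_\rho\cdot\spec(\rho(B))$, the union running over those $\rho$ actually occurring in $\sigma$, i.e. with $m_\rho\neq0$.

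The final, and key, step is to identify the multiplicity $m_\rho$ with $\rank(\rho(H))$. Since $\sigma$ is the representation induced from the trivial representation $\mathbf{1}$ of $H$, Frobenius reciprocity gives $m_\rho=\langle\mathbf{1},\rho|_H\rangle_H=\dim\rho^H$, the dimension of the $H$-fixed subspace of $\rho$. On the other hand, $\tfrac{1}{|H|}\rho(H)=\tfrac{1}{|H|}\sum_{h\in H}\rho(h)$ is idempotent and equals the orthogonal projection onto $\rho^H$, so $\rank(\rho(H))=\dim\rho^H=m_\rho$. Substituting this into the spectral decomposition completes the proof. I expect the main obstacle to be the bookkeeping in the third step, namely verifying that the block-diagonalization of the fiber commutes with the block structure inherited from $B$ so that each irreducible contributes a clean tensor factor $I_{m_\rho}\otimes\rho(B)$; the multiplicity computation, by contrast, is a direct application of character theory and specializes correctly (with $H=\{e\}$, giving $m_\rho=d_\rho$) to recover Theorem~\ref{theo-sp}.
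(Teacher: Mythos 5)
First, note that this paper never proves Theorem \ref{theo-sp2}: it is imported from \cite{dfss19} without proof, so the only meaningful comparison is with that source (and its companion \cite{dfs19}), whose arguments are ``bottom-up'': explicit eigenvectors of $\Gamma^{\beta}$ are constructed from eigenvectors of $\rho(B)$, and $\rank(\rho(H))$ counts how many independent ones each produces, which is also what yields the eigenspaces. Your route is genuinely different and, for the spectrum alone, cleaner: you identify the fiber space $\mathbb{C}^{G/H}$ with the induced module $\mathrm{Ind}_H^G\mathbf{1}$, block-diagonalize the whole adjacency matrix in one stroke, and read off multiplicities from character theory. Your final step is exactly right and is the heart of the matter: $\frac{1}{|H|}\rho(H)$ is the projector onto the $H$-fixed space $\rho^H$, so $\rank(\rho(H))=\dim\rho^H=\langle\mathbf{1},\rho|_H\rangle_H$, which by Frobenius reciprocity is the multiplicity of $\rho$ in the permutation representation; and $H=\{e\}$ correctly recovers Theorem \ref{theo-sp}.

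The gap is in the step you postponed, and under the conventions as printed here it is not mere bookkeeping. With the incidence rule ``arc from $(u,J)$ to $(v,\beta(a)J)$'' and rows of $A^{\beta}$ indexed by sources, the fiber block contributed by an arc $a$ is the matrix whose $(J,K)$ entry is $1$ exactly when $K=\beta(a)J$; for the homomorphism $\sigma$ you use (defined by $\sigma(g)e_J=e_{gJ}$) this matrix is $\sigma(\beta(a))^{\top}=\sigma(\beta(a)^{-1})$, not $\sigma(\beta(a))$. So what is literally true is $A^{\beta}=\sigma(B')$, where $B'$ is $B$ with every voltage inverted, and your argument then proves $\spec\Gamma^{\beta}=\bigcup_{\rho}\rank(\rho(H))\cdot\spec(\rho(B'))$. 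This coincides with the stated formula whenever the base digraph has a single vertex---the Cayley setting of Lemma \ref{th:sigma-tau-sp}, where $\rho(B')=\rho(B)^{\ast}$ and the union is conjugation-closed because $\rho\mapsto\bar\rho$ permutes $\Irep(G)$, preserves $\rank(\rho(H))$, and conjugates spectra---and also whenever every voltage is an involution, as for the pancake graphs. It does \emph{not} coincide in general: take the directed triangle with voltages $x=(12)$, $y=(13)$, $z=(123)$ in $S_3$ and $H=\{e\}$; then $xyz=e$ while $zyx$ has order $3$, so every $\rho(B)$ satisfies $\rho(B)^3=I$ and the claimed union contains only cube roots of unity, whereas the lift under the rule $h=\beta(a)g$ consists of two directed $9$-cycles, whose spectrum contains primitive ninth roots of unity. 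The moral is that the theorem requires compatible left/right conventions (lift by $h=g\beta(a)$, or else index $B$ with sources as columns); the mismatch lies in the statement as reproduced in this survey as much as in your proof, but since your plan hinges on the identity $A^{\beta}=\sigma(B)$, you must fix the conventions explicitly---after which your argument is complete.
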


\subsection{The pancake graphs}
To illustrate our results, we use two families of Cayley graphs: The pancake graphs and a new family of mixed graphs introduced in \cite{Da19}, which can be seen as a generalization of both the pancake graphs and the cycle prefix digraphs. Let us first introduce the pancake graphs, together with some of their basic properties.

\begin{figure}[t]
	\begin{center}
		\includegraphics[width=14cm]{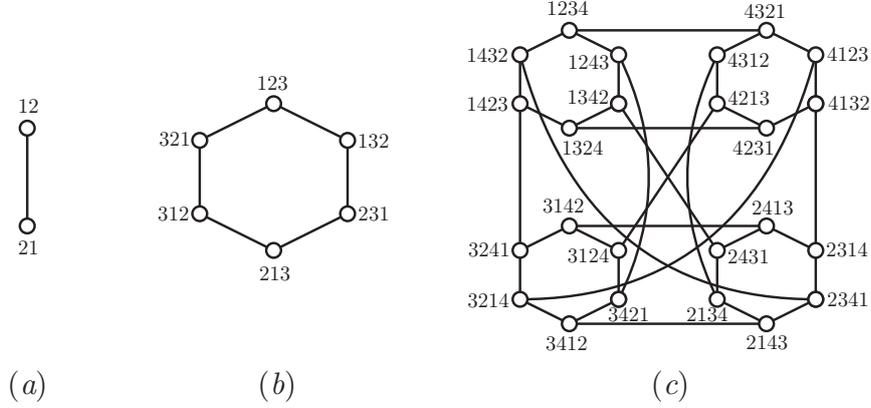}
	\end{center}
	\vskip-13cm
	\caption{Pancakes graphs: $(a)$ $P(2)$, $(b)$ $P(3)$, and $(c)$ $P(4)$.}
	\label{fig:pancakes}
\end{figure}

The {\it $n$-dimensional pancake graph}, proposed by Dweighter~\cite{d75} (see also Akers and Krishnamuthy~\cite{ak89}), and denoted by $P(n)$,
is a graph with the vertex set $V(P(n))=\big\{x_1x_2\ldots x_n|x_i\in [n],$ $
x_i\neq x_j\mbox{ for } i\neq j\big\}$. Its adjacencies are as follows:

\begin{equation}
x_1x_2\ldots x_n\ \sim\
\left\{
\begin{array}{l}
x_1\ldots x_{n-2} x_{n}x_{n-1},\\
x_1\ldots x_{n-3}x_nx_{n-1}x_{n-2},\\
x_1\ldots x_{n-4}x_nx_{n-1}x_{n-2}x_{n-3},\\
\hskip1cm \vdots \\
x_{n}x_{n-1}\ldots x_{2}x_{1}.
\end{array}
\right.
\label{eq:adj-pancake}
\end{equation}

The pancake graph $P(n)$ is a vertex-transitive $(n-1)$-regular graph with $n!$ vertices. It is a Cayley graph $\Cay(G,S)$, where $G$ is the symmetric group $Sym(n)$ and the generating set $S$ corresponds to the permutations of $x_1,x_2,\ldots, x_n$ given by \eqref{eq:adj-pancake}.
As examples, the pancakes graphs $P(2)$, $P(3)$, and $P(4)$ are shown in Figure~\ref{fig:pancakes}.

The exact diameters $k=k(n)$ of $P(n)$ are only known for $n\le 17$, as shown in Table~\ref{k(n)-P(n)} (see Cibulka \cite{cib11}
and Sloane~\cite{Sl07}). The best results to our knowledge were given by Gates and Papadimitriou \cite{gp79}, who proved that
\begin{equation*}
\label{bounds-k(n)}
\frac{17}{16}n\le k(n) \le \frac{5n+5}{3},
\end{equation*}
and by Heydari and Sudborough \cite{h97}, who improved the lower bound to
\begin{equation*}
k(n)\ge \frac{15}{14}n.
\end{equation*}

\begin{table}[t]
	\begin{center}
		\begin{tabular}{|c|ccccccccccccccccc|}
			\hline
			$n$ & 1 & 2 & 3 & 4 & 5 & 6 & 7 & 8 & 9  & 10 & 11 & 12 & 13 & 14 & 15 & 16 & 17 \\
			\hline
			$k$ & 0 & 1 & 3 & 4 & 5 & 7 & 8 & 9 & 10 & 11 & 13 & 14 & 15 & 16 & 17 & 18 & 19\\
			\hline
		\end{tabular}
	\end{center}
	\vskip-.25cm
	\caption{The known values of the diameter $k$ of the pancake graph $P(n)$.}
	\label{k(n)-P(n)}
\end{table}

\subsection{The new mixed graphs $\Gamma(d,n,r)$}

Recently, the pancake graphs, together with the cyclic prefix digraphs, were used by the first author \cite{Da19} to propose a new general family of mixed graphs.

The definition of these new mixed graphs is as follows.
\begin{definition}
	Given the integers $n\ge 2$ and $d,r\ge 1$, with $r<n\leq d+1$, the mixed graph $\G(d,n,r)$ has as vertex set the $n$-permutations of the $d+1$ symbols $1,2,\ldots,d,d+1$.
	Moreover, a vertex $x_1x_2\ldots x_n$ is adjacent, through edges, to the $r$ vertices
	\begin{equation}
	x_1x_2\ldots x_n\ \sim\
	\left\{
	\begin{array}{l}
	x_1x_2\ldots x_{n-2}x_{n}x_{n-1}\\
	x_1x_2\ldots x_{n-3}x_n x_{n-1}x_{n-2}\\
	\hskip 1cm\vdots \\
	x_1\ldots x_{n-r-1}x_n x_{n-1}\ldots x_{n-r}
	\end{array}
	\right.
	\label{edges}
	\end{equation}
	and adjacent, through arcs, to the $z=d-r$ vertices
	\begin{equation}
	x_1x_2\ldots x_n\ \rightarrow\
	\left\{
	\begin{array}{l}
	x_2x_3\ldots x_{n}y,\quad y\neq x_i,\ i=1,\ldots,n\quad \mbox{$($$d-n+1$ vertices$)$}\\
	\left.
	\begin{array}{l}
	x_1\ldots x_{n-r-2}x_{n-r}\ldots x_{n}x_{n-r-1}\\
	x_1\ldots x_{n-r-3}x_{n-r-1}\ldots x_nx_{n-r-2}\\
	\hskip1cm\vdots \\
	x_2\ldots x_{n}x_{1}
	\end{array}
	\right\}\ \mbox{$($$n-r-1$ vertices$)$.}
	\end{array}
	\right.
	\label{arcs}
	\end{equation}
\end{definition}

\begin{figure}[t]
	\begin{center}
		\includegraphics[width=14cm]{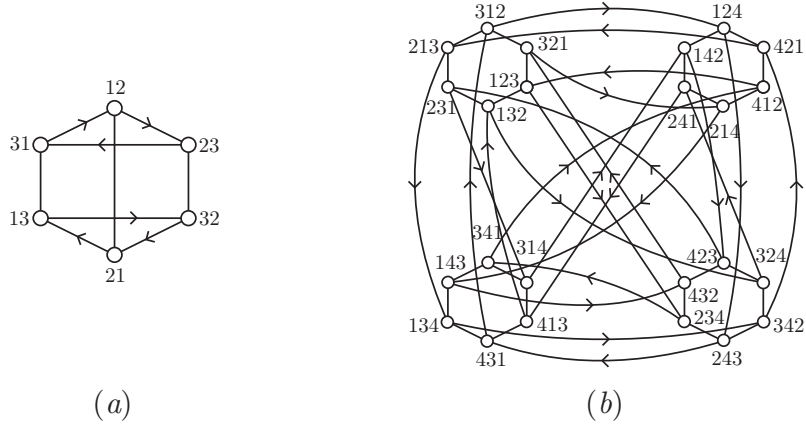}
	\end{center}
	\vskip-12.75cm
	\caption{New mixed graphs: $(a)$ $\Gamma(2,2,1)$ and $(b)$ $\Gamma(3,3,2)$.}
	\label{fig:nous-mixed}
\end{figure}

Thus, the number of vertices of the mixed graph $\G(d,n,r)$ is the number of $n$-permuta\-tions of $d+1$ elements,
$N=\frac{(d+1)!}{(d+1-n)!}$.	
Moreover, $\G(d,n,r)$ is a totally $(r,z)$-regular mixed graph, and it is also vertex-transitive.
In particular, if $n=d+1$ and $r=d$, then $\G(n-1,n,n-1)$ is the pancake graph $P(n)$; and if $r=1$, then $\G(d,n,1)$ coincides with the so-called cycle prefix digraph $\G_d(n)$ (notice that in this case, we require that $d\geq n$), see Faber, Moore, and Chen \cite{fmc93} or Comellas and Fiol~\cite{cf95}.
As examples, the new mixed graphs $\Gamma(2,2,1)$ and $\Gamma(3,3,2)$ are depicted in Figure \ref{fig:nous-mixed}.

\section{Regular partitions of vertices from number partitions}

Given a permutation $\pi:[n]\rightarrow [n]$, we denote by $P(\pi)=(p_{ij})$ the $n\times n$ permutation matrix with entries
$p_{ij}=1$ if $\pi(i)=j$, and $0$ otherwise (that is, the so-called {\em column representation}).

\begin{proposition}
\label{propo1}
Let $\Gamma=\Cay(G,S)$ be a Cayley digraph, where $G$ is a subgroup of the symmetric group $\Sym(n)$ and its generating set $S$ is given by the permutations $\{\pi_1,\pi_2,\ldots,$ $\pi_k\}$. Then,  $\Gamma$ has a regular partition  $\beta$ with quotient matrix $B=\sum_{i=1}^{k} P(\pi_i)$.
\end{proposition}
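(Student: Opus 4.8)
The plan is to exhibit an explicit partition $\beta$ of the vertex set $V=G$ into $n$ classes indexed by $[n]$, tailored to the natural action of $G\le \Sym(n)$ on $[n]$, and then to read off its intersection numbers directly. Fix a point of $[n]$, say $n$ itself, and, regarding each vertex as a permutation $g\in G$, define the \emph{cell map} $c(g)=g(n)$; thus the class $V_j=\{g\in G: g(n)=j\}$ consists of those permutations sending $n$ to $j$. Since $G$ acts transitively on $[n]$ (as does $\Sym(n)$ in the examples of interest), the orbit of $n$ is all of $[n]$, every class $V_j$ is nonempty, and $\beta=(V_1,\dots,V_n)$ is a genuine partition into $n$ cells.

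The first computation I would carry out concerns the behaviour of $c$ under the generators. Taking $\Cay(G,S)$ with arcs $g\to \pi_i g$ (left multiplication), the out-neighbourhood of $g$ is $\Gamma^+(g)=\{\pi_1 g,\dots,\pi_k g\}$, a set of $k$ \emph{distinct} vertices since the $\pi_i$ are pairwise different. The key identity is that, for $g\in V_j$ and ordinary composition of permutations,
\[
c(\pi_i g)=(\pi_i g)(n)=\pi_i\bigl(g(n)\bigr)=\pi_i(j).
\]
Hence the neighbour reached through $\pi_i$ lies in the class $V_{\pi_i(j)}$, a cell that depends only on $j$ and on the generator $\pi_i$, never on the particular $g$.

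Regularity and the identification of $B$ then follow at once. For any $g\in V_j$ the intersection number into a class $V_\ell$ is
\[
b_{j\ell}(g)=|\Gamma^+(g)\cap V_\ell|=\bigl|\{\,i:\pi_i(j)=\ell\,\}\bigr|,
\]
which is independent of the chosen representative $g\in V_j$; by Lemma~\ref{g93}$(i)$ this is precisely the defining property of a regular partition. Finally, the column representation gives $(P(\pi_i))_{j\ell}=1$ exactly when $\pi_i(j)=\ell$, so $b_{j\ell}=\sum_{i=1}^{k}(P(\pi_i))_{j\ell}$ and therefore the quotient matrix is $B=\sum_{i=1}^{k}P(\pi_i)$, as claimed. (As a consistency check, every row of $B$ sums to $k$, matching the out-regularity of the Cayley digraph.)

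I expect the only genuinely delicate point to be the bookkeeping of conventions. The quotient emerges as $\sum_i P(\pi_i)$, rather than its transpose, precisely because I pair the \emph{image} map $c(g)=g(n)$ with \emph{left} multiplication $g\mapsto\pi_i g$; the dual choice (the preimage map $g\mapsto g^{-1}(n)$ together with right multiplication) would instead produce $\sum_i P(\pi_i)^{\top}=\sum_i P(\pi_i^{-1})$. One should therefore fix the multiplication convention of $\Cay(G,S)$ first and match the cell map to it, while transitivity of the action---automatic for $\Sym(n)$---is what ensures all $n$ cells are nonempty so that $B$ has the full format $n\times n$.
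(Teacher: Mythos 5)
Your proof is correct and follows essentially the same route as the paper: both partition the vertices of $\Cay(G,S)$ into $n$ cells by tracking a single symbol (you track the image $g(n)$, the paper tracks the position of the digit $1$), observe that each generator moves a whole cell into a single cell, and identify the intersection numbers with the entries of $\sum_{i=1}^{k}P(\pi_i)$. The only difference is the dual bookkeeping convention you yourself flag (image map with left multiplication versus position-of-a-digit), which does not change the substance of the argument.
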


\begin{proof}
Let us show that the cells of the regular partition $\beta$ are the sets $V_i$, for $i=1,\ldots,n$, constituted by the permutations with a given digit, say $1$, in the fixed position $i$, that is $V_i=\{\pi\in S:\pi(i)=1\}$.
Indeed, if $u\in V_i$, the number of vertices $|\G^+(u)\cap V_j|$ (adjacent from $u$ and belonging to $V_j$) corresponds to the number of the permutations in $S$ that sends $1$ from the position $i$ to position $j$. This is precisely the $(i,j)$-entry of the matrix $B$, which is independent of $u$.
\end{proof}

\begin{figure}[t]
	\begin{center}
		\includegraphics[width=14cm]{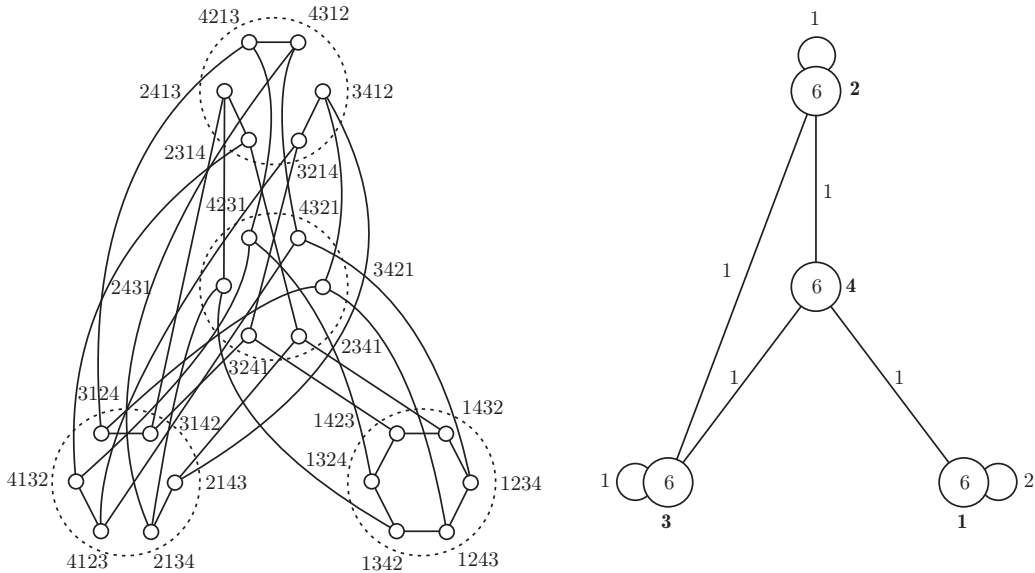}
	\end{center}
	\vskip-11cm
	\caption{A regular partition of the pancake graph $P(4)$ and its quotient graph. In boldface there is the numbering of the vertices (or classes).}
	\label{fig:particio-P(4)}
\end{figure}

As a corollary, we have a handy way of obtaining some of the eigenvalues of $\G$ since, by Lemma \ref{g93}$(ii)$,
$\spec B \subset \spec A(\Gamma).$


\begin{example*}[\textbf{Pancake graph} $P(4)$]
\label{ex:1}
Consider the pancake graph $P(4)$
as the Cayley graph $\Cay(S_4,S)$ with $S=\{(34),(24),(14)(23)\}$. Then, the sum of the corresponding permutation matrices $B=P((34))+P((24))+P((14)(23))$ turns out to be
$$
B=\left(
\begin{array}{cccc}
2 & 0 & 0 & 1\\
0 & 1 & 1 & 1\\
0 & 1 & 1 & 1\\
1 & 1 & 1 & 0
\end{array}
\right).
$$
According to Proposition \ref{propo1}, this is the quotient matrix of a regular partition of $P(4)$, as shown in Figure~\ref{fig:particio-P(4)}, together with its quotient graph. Notice that, as claimed, each class of vertices contains all the permutations with $1$ in a fixed position. Moreover, $\spec B=\{3,2,0,-1\}$, a part of the spectrum of $P(4)$ that, as we show in Section \ref{sec:digrafs-quocient}, it is
\begin{equation}
\label{sp(P4)}
\spec P(4)=\left\lbrace[3]^1,[2]^5,\left[\textstyle\frac{-1+\sqrt{17}}{2}\right]^3,[0]^5,\textstyle\left[ \frac{-1-\sqrt{17}}{2}\right]^3,[-1]^4,[-2]^3\right\rbrace.
\end{equation}
\end{example*}


\begin{example*}[\textbf{New mixed graph} $\Gamma(3,3,2)$]
		Consider the new mixed graph $\Gamma(3,3,2)$
	as the Cayley graph $\Cay(S_4,S)$ with $S=\{(34),(24),(2341)\}$. Now, the sum of the corresponding permutation matrices $B=P((34))+P((24))+P((2341))$ is
	$$
	B=\left(
	\begin{array}{cccc}
	2 & 0 & 0 & 1\\
	1 & 1 & 0 & 1\\
	0 & 1 & 1 & 1\\
	0 & 1 & 2 & 0
	\end{array}
	\right),
	$$
    that corresponds to the quotient matrix of the regular partition shown in Figure~\ref{fig:nous-mixed-part-reg}.
	Moreover, $\spec B=\{[3]^1,[1]^2,[-1]^2\}$. As expected, $\spec B\subset \spec\Gamma(3,3,2)$, since, as shown in Section \ref{sec:digrafs-quocient},
	\begin{equation}
	\label{eq:espectre-new-mixed}
	\spec\Gamma(3,3,2) = \{[3]^1,[\sqrt{3}]^2,[1]^9,[-1]^9,[-\sqrt{3}]^2,[-3]^1\}.
	\end{equation}
Note that this spectrum is symmetric, in concordance with the fact that $\Gamma(3,3,2)$ is a {\em bipartite} (mixed) graph.
\end{example*}

\begin{figure}[t]
	\begin{center}
		\includegraphics[width=14cm]{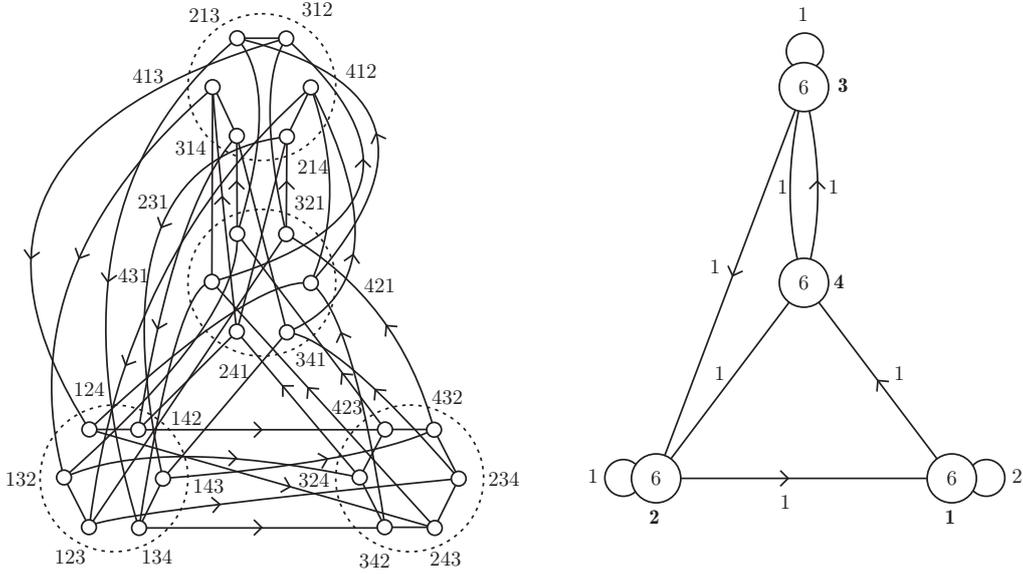}
	\end{center}
	\vskip-11cm
	\caption{The new mixed graph $\Gamma(3,3,2)$ drawn as a regular partition and its quotient graph. In boldface there is  the numbering of the vertices.}
	\label{fig:nous-mixed-part-reg}
\end{figure}


The result of the previous examples can be generalized to obtain some eigenvalues and their associated eigenvectors of the whole family of the pancake graphs $P(n)$ and the new mixed graphs $\G(n,n,n-1)$, as shown in the following results.

\begin{proposition}
The  matrix $B_n=\sum_{i=1}^{n} P(\pi_i)$ of the pancake graph $P(n)$ is the sum $B_n=D_n+T_n$, where $D_n=\diag(n-2,n-1,\ldots,0,-1)$
and $T_n$ is the `lower anti-triangular matrix' with entries $(T_n)_{ij}=1$ if $i+j\ge n+1$, and $(T_n)_{ij}=0$ otherwise,
with spectrum
$$
\spec B_n
=\{n-1,n-2,\ldots,0,-1\}\setminus \{\lfloor (n/2)-1\rfloor\}
$$
(all the eigenvalues with multiplicity one).
Moreover, their associated eigenvectors are, respectively,
the all-1 vector $(1,1,\ldots,1)^{\top}$,
\begin{equation}
\label{eigenvects1}
	 \Big( 0,\stackrel{(r-1)}{\ldots},0,n-2r,-1,\stackrel{(n-2r)}{\ldots},-1,0,\stackrel{(r)}{\ldots},0\Big)^{\top}, \mbox{ for }
\left\{
\begin{array}{ll}
	 r=1,\ldots,\lfloor n/2\rfloor & (n \mbox{ odd}), \\
	 r=1,\ldots,\lfloor n/2\rfloor-1 & (n \mbox{ even}),
\end{array}
 \right.
 \end{equation}
  and
\begin{equation}
\label{eigenvects2}
	\Big( 0,\stackrel{(r-1)}{\ldots},0,-1,\stackrel{(n-2r+1)}{\ldots},-1,
		n-2r+1,0,\stackrel{(r-1)}{\ldots},0\Big)^{\top}, \mbox{ for } r=\lfloor n/2\rfloor,\ldots,\ldots,1.
\end{equation}
\end{proposition}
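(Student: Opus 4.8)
The plan is to prove the three assertions in turn: first the decomposition $B_n=D_n+T_n$, then the eigenvalue–eigenvector pairs, and finally that these pairs exhaust the spectrum.

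First I would make $B_n$ explicit. By Proposition~\ref{propo1} the entry $(B_n)_{ij}$ counts the generators that carry the symbol $1$ from position $i$ to position $j$; for the pancake graph these are the suffix reversals $r_2,\dots,r_n$ of \eqref{eq:adj-pancake}, where $r_k$ fixes positions $1,\dots,n-k$ and sends a position $p\in\{n-k+1,\dots,n\}$ to $2n-k+1-p$. Solving $r_k(i)=j$ for $k$ in the two cases $i\ne j$ and $i=j$, I would check that off the diagonal there is exactly one admissible reversal precisely when $i+j\ge n+1$ (the upper constraint $i+j\le 2n-1$ being automatic off the diagonal), while on the diagonal the fixed-position count receives a $+1$ correction exactly when $i$ is the central fixed point of some $r_k$, i.e.\ when $i\ge (n+1)/2$. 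Collecting the counts yields $(B_n)_{ij}=(D_n)_{ij}+(T_n)_{ij}$ with $(D_n)_{ii}=n-1-i$ and $T_n$ the anti-triangular matrix of the statement, which is the first claim.

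The key remark for the spectral part is that $T_n$ acts as a reversed tail sum, $(T_n v)_i=\sum_{j\ge n+1-i}v_j$, the sum of the last $i$ coordinates of $v$. I would then substitute the two candidate families into $B_n v=(D_n+T_n)v$. Each candidate is a block vector — zeros on the two outer segments, a constant $-1$ on a central interval, and a single compensating entry — so its tail sums $(T_n v)_i$ are piecewise linear in $i$ and can be evaluated region by region. For family \eqref{eigenvects1} the combination $(n-1-i)v_i+(T_n v)_i$ collapses to $(n-1-r)\,v_i$ on the whole support, giving eigenvalue $n-1-r$ (the matching at the distinguished coordinate $i=r$, where $v_i=n-2r$, forces $n\ne 2r$); for family \eqref{eigenvects2} it collapses to $(r-2)\,v_i$, giving eigenvalue $r-2$ (the matching at its distinguished coordinate forces $n\ne 2r-1$). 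The all-$1$ vector is an eigenvector for $n-1$ because $B_n$ has constant row sum $n-1$.

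Finally I would count. The listed ranges of $r$ are exactly those avoiding the two degeneracies $n=2r$ and $n=2r-1$, so they supply $1+\lfloor n/2\rfloor+\lfloor n/2\rfloor$ vectors for $n$ odd and $1+(\lfloor n/2\rfloor-1)+\lfloor n/2\rfloor$ for $n$ even, in both cases exactly $n$ vectors with pairwise distinct eigenvalues. Distinct eigenvalues force linear independence, so these $n$ pairs constitute the whole spectrum of the $n\times n$ matrix $B_n$, each eigenvalue simple. Taking the union, family \eqref{eigenvects1} together with the all-$1$ vector covers the integers from $\lfloor n/2\rfloor$ up to $n-1$, family \eqref{eigenvects2} covers $-1$ up to $\lfloor n/2\rfloor-2$, and the single gap between the two blocks is $\lfloor n/2\rfloor-1=\lfloor n/2-1\rfloor$, which yields the stated spectrum. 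The main obstacle is the region-by-region bookkeeping in the previous paragraph: getting the tail sums right at the block boundaries and, above all, verifying the collapse at the distinguished coordinate (value $n-2r$ or $n-2r+1$), since that is exactly where the nondegeneracy conditions $n\ne 2r$ and $n\ne 2r-1$ enter and where an off-by-one in the index regions would corrupt the eigenvalue.
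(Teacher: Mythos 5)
Your proposal is correct, and its spectral core is the same computation as the paper's: the paper verifies $B_nv_r=D_nv_r+T_nv_r$ coordinate by coordinate for the family \eqref{eigenvects1} (even $n$ only) and declares the remaining cases analogous, and your tail-sum identity $(T_nv)_i=\sum_{j\ge n+1-i}v_j$ is just a cleaner way of organizing that same region-by-region check. Where you genuinely add to the paper is at the two ends of the argument. First, the paper treats the identity $B_n=D_n+T_n$ as ``straightforward to check,'' exhibiting only the case $n=5$; you actually derive it by solving $r_k(i)=j$ for the suffix reversals, which is a real proof. One corner of that derivation needs patching: your criterion that the diagonal receives a $+1$ correction exactly when $i\ge (n+1)/2$ fails at $i=n$, where the would-be reversal is $r_1$, which is not a generator; the identity survives at the $(n,n)$ entry only because the ``fixed-position count'' $n-1-i=-1$ of $D_n$ is not a genuine count there either, and the two discrepancies cancel to give $(B_n)_{nn}=0$. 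Second, and more importantly, the paper never argues completeness: it exhibits eigenpairs but does not show they exhaust $\spec B_n$ or that the eigenvalues are simple, whereas your final step ($n$ candidate vectors with pairwise distinct eigenvalues, hence linearly independent, hence a full eigenbasis of the $n\times n$ matrix) supplies exactly this missing argument; you also make explicit the eigenvalue $r-2$ of family \eqref{eigenvects2}, which the paper leaves unstated. One last clarification to make when writing this up: the conditions $n\ne 2r$ and $n\ne 2r-1$ are not really ``forced by the matching at the distinguished coordinate'' --- the eigenvector equation holds trivially in the excluded cases --- rather, at $n=2r$ (resp.\ $n=2r-1$) the candidate vector of family \eqref{eigenvects1} (resp.\ \eqref{eigenvects2}) degenerates to the zero vector, which is why those values of $r$ are omitted from the stated ranges.
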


\begin{proof}
First, it is straightforward to check that the matrix $B_n$ is as claimed. We just have to compute the sum of the involved permutation matrices. For example, for $n=5$, we get
\begin{align*}
&B_5={\scriptsize{
\left(
\begin{array}{ccccc}
1 & 0 & 0 & 0 & 0\\
0 & 1 & 0 & 0 & 0\\
0 & 0 & 1 & 0 & 0\\
0 & 0 & 0 & 0 & 1\\
0 & 0 & 0 & 1 & 0
\end{array}
\right)+
\left(
\begin{array}{ccccc}
1 & 0 & 0 & 0 & 0\\
0 & 1 & 0 & 0 & 0\\
0 & 0 & 0 & 0 & 1\\
0 & 0 & 0 & 1 & 0\\
0 & 0 & 1 & 0 & 0
\end{array}
\right)+
\left(
\begin{array}{ccccc}
1 & 0 & 0 & 0 & 0\\
0 & 0 & 0 & 0 & 1\\
0 & 0 & 0 & 1 & 0\\
0 & 0 & 1 & 0 & 0\\
0 & 1 & 0 & 0 & 0
\end{array}
\right)
+
\left(
\begin{array}{ccccc}
0 & 0 & 0 & 0 & 1\\
0 & 0 & 0 & 1 & 0\\
0 & 0 & 1 & 0 & 0\\
0 & 1 & 0 & 0 & 0\\
1 & 0 & 0 & 0 & 0
\end{array}
\right)}}
\\
 &
 ={\scriptsize{\left(
\begin{array}{ccccc}
3 & 0 & 0 & 0 & 1\\
0 & 2 & 0 & 1 & 1\\
0 & 0 & 2 & 1 & 1\\
0 & 1 & 1 & 1 & 1\\
1 & 1 & 1 & 1 & 0
\end{array}
\right)
 =\left(
\begin{array}{ccccc}
3 & 0 & 0 & 0 & 0\\
0 & 2 & 0 & 0 & 0\\
0 & 0 & 1 & 0 & 0\\
0 & 0 & 0 & 0 & 0\\
0 & 0 & 0 & 0 & -1
\end{array}
\right)
+\left(
\begin{array}{ccccc}
0 & 0 & 0 & 0 & 1\\
0 & 0 & 0 & 1 & 1\\
0 & 0 & 1 & 1 & 1\\
0 & 1 & 1 & 1 & 1\\
1 & 1 & 1 & 1 & 1
\end{array}
\right)}}
=D_5+T_5.
\end{align*}

Concerning
the eigenpairs, in the case that $n$ is even, let us check that, for every $r=1,\ldots,n/2$,
$$
v_r=\Big( 0,\stackrel{(r-1)}{\ldots},0,n-2r,-1,\stackrel{(n-2r)}{\ldots},-1,0,\stackrel{(r)}{\ldots},0\Big)^{\top}
$$
is an eigenvector with eigenvalue $\lambda_r=n-r-1$:

\begin{align*}
&B_n v_k =D_n v_k+T_n v_k\\
&=\Big( 0,\stackrel{(r-1)}{\ldots},0,(n-2r)(n-r-1),-(n-r-2),-(n-r-3),\ldots,-(r-1),0,\stackrel{(r)}{\ldots},0\Big)^{\top}\\
&+\Big( 0,\stackrel{(r)}{\ldots},0,-1,-2,\ldots,-(n-2r),0,\stackrel{(r)}{\ldots},0\Big)^{\top}\\
&=\Big( 0,\stackrel{(r-1)}{\ldots},0,(n-2r)\lambda_r,-\lambda_r+1,-\lambda_r+2,-\lambda_r+3,\ldots,-\lambda_r+(n-2r),0,\stackrel{(r)}{\ldots},0\Big)^{\top}\\
&+\Big( 0,\stackrel{(r)}{\ldots},0,-1,-2,\ldots,-(n-2r),0,\stackrel{(r)}{\ldots},0\Big)^{\top}\\
&=\lambda_r\Big( 0,\stackrel{(r-1)}{\ldots},0,n-2r,-1,\stackrel{(n-2r)}{\ldots},-1,0,\stackrel{(r)}{\ldots},0\Big)^{\top}=\lambda_r v_r.
\end{align*}
The other eigenpairs and the case for odd $n$ can be proved analogously.
\end{proof}
For example, for the case $n=5$, we obtain
$\spec B_5=\{4,3,2,0,-1\}$,
with corresponding matrix of (column) eigenvectors
$$
\left(
\begin{array}{rrrrr}
	1 & 3 & 0 & 0 & -1\\
	1 & -1 & 1 & -1 & -1\\
	1 & -1 & -1 & -1 & -1\\
	1 & -1 & 0 & 2 & -1\\
	1  & 0 & 0 & 0 & 4
\end{array}
\right).
$$
Looking at last column, Lemma \ref{g93}$(ii)$ implies that $P(n)$ has the eigenvalue $-1$ whose eigenvector has entries $-1$ for the vertices of the classes $1,2,\ldots,n-1$, and entries $n-1$ for the vertices of the class $n$ (permutations with last symbol $1$, as shown in Figure \ref{fig:particio-P(4)} for the case $n=4$).
In this situation, it is known that the graph has a {\em perfect code} or {\em efficient dominating set} $C$ (that is, $C$ is an independent vertex set, and each vertex not in $C$ is adjacent to exactly one vertex in $C$). In other words, to each perfect code $C$ corresponds a $(-1)$-eigenvector as described. See, for instance, Godsil \cite{g93}.
Then, by using the result by Konstantinova \cite[Thm. 1]{k13}, who proved that the pancake graph $P(n)$ contains exactly $n$ perfect codes (the sets of vertices with the same last symbol), we get the following result. First, recall that a circulant matrix $\Circ (a_1,a_2,\ldots,a_n)$ has first row
$a_1,a_2,\ldots,a_n$ and, for $i=2,\ldots,n$, its $i$-th row  is obtained from the $(i-1)$-th row by cyclically shifting it to the right one position.

\begin{lemma}
The pancake graph $P(n)$ has eigenvalue $-1$ with multiplicity $m(-1)\ge n-1$.
\end{lemma}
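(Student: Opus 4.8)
The plan is to exhibit $n-1$ linearly independent $(-1)$-eigenvectors of the adjacency matrix $A$ of $P(n)$, each arising from a perfect code. By the discussion preceding the statement, we know that $P(n)$ contains exactly $n$ perfect codes $C_1,\ldots,C_n$, where $C_j$ is the set of permutations having the symbol $1$ in the $j$-th position (equivalently, a fixed symbol in a fixed position). To each perfect code $C_j$ we associate the vector $\vecv_j\in\Re^{n!}$ defined by $(\vecv_j)_u=n-1$ if $u\in C_j$ and $(\vecv_j)_u=-1$ otherwise. As recalled from Godsil \cite{g93}, the defining property of a perfect code (an independent set meeting the closed neighbourhood of every vertex exactly once) guarantees precisely that $A\vecv_j=-\vecv_j$, so each $\vecv_j$ is a $(-1)$-eigenvector.

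First I would make this correspondence explicit: for any vertex $u$, exactly one of its $n$ closed-neighbourhood vertices (itself together with its $n-1$ neighbours) lies in $C_j$, since $C_j$ is a perfect code in the $(n-1)$-regular graph $P(n)$. Computing $(A\vecv_j)_u$, one contribution of $n-1$ comes from the unique neighbour in $C_j$ (if $u\notin C_j$), balanced against contributions of $-1$ from the remaining $n-2$ neighbours, yielding $(A\vecv_j)_u=(n-1)-(n-2)=1=-(\vecv_j)_u$; and symmetrically when $u\in C_j$ all $n-1$ neighbours lie outside $C_j$, giving $(A\vecv_j)_u=-(n-1)=-(\vecv_j)_u$. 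This confirms $A\vecv_j=-\vecv_j$ for each $j=1,\ldots,n$.

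Next I would bound the multiplicity from below by the dimension of the span of $\{\vecv_1,\ldots,\vecv_n\}$. Writing $\vecv_j=n\,\vecchi_j-\vecj$, where $\vecchi_j$ is the characteristic vector of $C_j$ and $\vecj$ is the all-ones vector, the span of the $\vecv_j$ lies inside the span of $\{\vecchi_1,\ldots,\vecchi_n,\vecj\}$. The cells $C_1,\ldots,C_n$ form a partition of the vertex set (each permutation places $1$ in exactly one position), so $\sum_{j=1}^n\vecchi_j=\vecj$ and the $\vecchi_j$ are themselves linearly independent with pairwise disjoint supports. Hence $\{\vecchi_1,\ldots,\vecchi_n\}$ spans an $n$-dimensional space containing $\vecj$, and the $\vecv_j$ satisfy the single relation $\sum_{j=1}^n\vecv_j=n\vecj-n\vecj=\mathbf{0}$.

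The key step is then to verify that this is the \emph{only} linear relation among the $\vecv_j$, so that $\dim\operatorname{span}\{\vecv_1,\ldots,\vecv_n\}=n-1$. This follows because any relation $\sum_j c_j\vecv_j=\mathbf{0}$ gives $\sum_j c_j(n\vecchi_j-\vecj)=\mathbf{0}$, i.e. $n\sum_j c_j\vecchi_j=(\sum_j c_j)\vecj=(\sum_j c_j)\sum_k\vecchi_k$; comparing coefficients on the disjoint supports of the $\vecchi_k$ forces $nc_j=\sum_k c_k$ for every $j$, so all $c_j$ are equal, and the relation is a scalar multiple of $\sum_j\vecv_j=\mathbf{0}$. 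Therefore the $(-1)$-eigenspace has dimension at least $n-1$, giving $m(-1)\ge n-1$. The main obstacle is the careful bookkeeping in this independence argument; the eigenvector computation itself reduces to the defining combinatorial property of perfect codes, which we may invoke directly.
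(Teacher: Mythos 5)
Your overall strategy---one $(-1)$-eigenvector per perfect code, then a linear-independence count---is exactly the paper's, and your independence argument (the only relation among the $\vecv_j$ is $\sum_j\vecv_j=\mathbf{0}$, because the codes partition the vertex set) is the same computation the paper packages as $\rank\Circ(n-1,-1,\ldots,-1)=n-1$. However, there is a genuine error in your identification of the perfect codes. You take $C_j=\{\pi:\pi(j)=1\}$, the permutations with symbol $1$ in the $j$-th position, ``equivalently, a fixed symbol in a fixed position.'' Under the paper's adjacency rule \eqref{eq:adj-pancake}, the generators are \emph{suffix} reversals, and a reversal of the last $k$ symbols with $2\le k\le n-j$ does not touch position $j$; hence $C_j$ is not even an independent set for $j\le n-2$, and reversing the last three symbols fixes position $n-1$, so $C_{n-1}$ fails as well. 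Concretely, for $n=4$ the adjacent vertices $1234$ and $1243$ both lie in your $C_1$, and $(A\vecv_1)_{1234}=3+3-1=5\neq-3=-(\vecv_1)_{1234}$, so $\vecv_1$ is not a $(-1)$-eigenvector. Only $C_n$ is a perfect code, so your construction produces one eigenvector, not $n$ of them, and the multiplicity bound collapses.

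The fix is to use the family that Konstantinova's theorem (quoted immediately before the lemma) actually provides: the $n$ perfect codes of $P(n)$ are the sets of permutations with the same \emph{last} symbol, $D_k=\{\pi:\pi(n)=k\}$ for $k=1,\ldots,n$---fixed position, varying symbol, not the other way around. (Independence holds because any suffix reversal of length at least $2$ changes the last symbol; domination holds because exactly one reversal brings the symbol $k$ to the last position.) These $D_k$ also partition the vertex set, since every permutation has exactly one last symbol, so both your eigenvector verification and your independence bookkeeping go through verbatim with $C_j$ replaced by $D_k$, at which point your proof coincides in substance with the paper's.
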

\begin{proof}
Each of the $n$ different perfect codes induces a regular partition with quotient matrix having a $(-1)$-eigenvector as above. Then, since $\rank \Circ(n-1,-1,\stackrel{(n-1)}{\ldots},-1)=n-1$, we conclude that $n-1$ of such eigenvectors are independent.
\end{proof}

In the case of the new mixed graph $\Gamma(n,n,n-1)$, we have the following result. 
\begin{proposition}
\label{propo2}
The  matrix $B'_n=\sum_{i=1}^{n} P(\pi_i)$ of the new mixed graph $\Gamma(n,n,n-1)$, $n\ge 3$,  is
the sum $B'_n=D_n+C_n+T'_n$, where $D_n=\diag(n-2,n-1,\ldots,0,-1)$,
$C_n$ is the circulant matrix $\Circ(0,1,0,\ldots,0)$, and $T'_n$ is the `lower anti-triangular matrix' with entries $(T'_n)_{ij}=1$ if $i+j > n+1$ and $(T'_n)_{ij}=0$ otherwise,
with eigenvalues
$
\{n-1,n-3,-1\}\subset \spec B'_n.
$
\end{proposition}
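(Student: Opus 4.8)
The plan is to mirror the structure of the preceding proposition on the pancake graph $P(n)$, adapting the decomposition to the mixed-graph setting. First I would verify the claimed decomposition $B'_n = D_n + C_n + T'_n$ by summing the permutation matrices $P(\pi_i)$ corresponding to the generating set of $\Gamma(n,n,n-1)$. Recall that by Proposition \ref{propo1}, the $(i,j)$-entry of $B'_n$ counts the permutations in $S$ that move the symbol $1$ from position $i$ to position $j$. For $\Gamma(n,n,n-1)$, the edge generators (which, with $n=d+1$ and $r=n-1$, reproduce the pancake adjacencies) contribute exactly the part $D_n + T'_n$ already understood from the pancake analysis, while the single additional arc generator — the cyclic-shift type permutation $x_2x_3\ldots x_n x_1$ described in \eqref{arcs} — contributes the circulant term $C_n = \Circ(0,1,0,\ldots,0)$. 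The difference between $T_n$ (entries where $i+j \ge n+1$) and $T'_n$ (entries where $i+j > n+1$) accounts for the fact that one fewer edge generator is present in the mixed graph, so the anti-diagonal is absent. I would confirm this entrywise, checking the small case $n=3$ (or $n=4$) against the explicit matrix displayed in the $\Gamma(3,3,2)$ example to fix conventions.

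Next, for the spectral claim I only need to exhibit three eigenpairs, since the statement asserts merely $\{n-1, n-3, -1\} \subset \spec B'_n$ rather than the full spectrum. The eigenvalue $n-1$ is immediate: because $\Gamma(n,n,n-1)$ is a totally regular mixed graph of total degree $n-1$, the all-ones vector $\j = (1,1,\ldots,1)^\top$ satisfies $B'_n \j = (n-1)\j$, which one can also read off directly since each row of $B'_n$ sums to the total degree. For the remaining two eigenvalues $n-3$ and $-1$, I would guess explicit eigenvectors guided by the pancake case. The $(-1)$-eigenvector associated with the perfect-code structure, namely a vector with entries $-1$ on classes $1,\ldots,n-1$ and entry $n-1$ on class $n$ (as in the pancake discussion following the previous proposition), is the natural candidate for $\lambda = -1$; I would apply $B'_n$ to it using the decomposition, noting that $D_n$, $C_n$, and $T'_n$ each act in a controlled way on such a patterned vector. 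For $n-3$, I would look for a vector supported near the "top" positions, analogous to the family \eqref{eigenvects1}, and verify $B'_n v = (n-3) v$ by the same split $B'_n v = D_n v + C_n v + T'_n v$.

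The main obstacle will be the interaction of the circulant term $C_n$ with the candidate eigenvectors: whereas in the pure pancake case the matrix was $D_n + T_n$ and the verification reduced to two aligned telescoping sums, here the extra cyclic shift $C_n$ couples position $i$ to position $i+1$ (cyclically), which can disturb the clean telescoping and, in particular, wraps around at the boundary between the last and first coordinates. The key step is therefore to choose the non-trivial eigenvectors so that the contribution of $C_n$ either vanishes or combines cleanly with the $D_n$ and $T'_n$ contributions; I expect this to force a slightly different support pattern than \eqref{eigenvects1}, and getting the boundary coordinate right is where the calculation is most delicate. Once suitable vectors are found, the verification is a routine application of the decomposition, and since we need only the inclusion of three specified eigenvalues, I need not diagonalize $B'_n$ completely.
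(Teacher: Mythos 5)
Your decomposition argument and the first two eigenvalues are fine, and they coincide with what the paper does. The structural justification of $B'_n=D_n+C_n+T'_n$ (the mixed graph's generators are the pancake generators with the full reversal --- whose permutation matrix is exactly the anti-diagonal $i+j=n+1$ --- replaced by the cyclic shift, so $T_n$ loses its anti-diagonal and the circulant $C_n$ appears) is sound, and is if anything a cleaner justification than the paper's ``just sum the matrices'' computation. For $n-1$ the all-ones vector is the paper's choice too, and for $-1$ your perfect-code-style vector $(-1,\ldots,-1,n-1)^{\top}$ is precisely the eigenvector the paper exhibits; one can check with the split $B'_n v=D_nv+C_nv+T'_nv$ that $(B'_nv)_i=1$ for $i<n$ and $(B'_nv)_n=-(n-1)$, so that part of your plan goes through.

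The gap is the eigenvalue $n-3$: you never produce an eigenvector, and the shape you propose to guess does not exist. The $(n-3)$-eigenvectors are \emph{not} ``supported near the top positions'' in the style of \eqref{eigenvects1}; for the $5\times 5$ matrix $B'_5$ displayed in the paper's proof, the eigenspace for the eigenvalue $n-3=2$ is spanned by $(2,5,-1,-4,-2)^{\top}$, a vector with full support and no clean pattern, and the wrap-around contribution of $C_n$ that you flag as ``delicate'' genuinely fails to cancel for vectors of the form \eqref{eigenvects1}. So your strategy stalls exactly at the step you identify as the hard one. The paper sidesteps eigenvectors entirely with a one-line singularity argument: row $1$ of $B'_n$ is $(n-2,0,\ldots,0,1)$ and row $2$ is $(1,n-3,0,\ldots,0,1)$, so the first two rows of $(n-3)I-B'_n$ are both equal to $(-1,0,\ldots,0,-1)$; hence $\det\bigl((n-3)I-B'_n\bigr)=0$ and $n-3\in\spec B'_n$. (The paper offers the same kind of trick as an alternative for $-1$, via a column dependency of $I+B'_n$.) Replacing your $n-3$ paragraph by this observation would close the gap; as written, the proposal proves only $\{n-1,-1\}\subset\spec B'_n$.
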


\begin{proof}
Again, by computing the sum of the involved permutation matrices, it is easy to check that the matrix $B'_n$ is as claimed. For example, for $n=5$, we get
\begin{align*}
&B'_5={\scriptsize{
\left(
\begin{array}{ccccc}
1 & 0 & 0 & 0 & 0\\
0 & 1 & 0 & 0 & 0\\
0 & 0 & 1 & 0 & 0\\
0 & 0 & 0 & 0 & 1\\
0 & 0 & 0 & 1 & 0
\end{array}
\right)+
\left(
\begin{array}{ccccc}
1 & 0 & 0 & 0 & 0\\
0 & 1 & 0 & 0 & 0\\
0 & 0 & 0 & 0 & 1\\
0 & 0 & 0 & 1 & 0\\
0 & 0 & 1 & 0 & 0
\end{array}
\right)+
\left(
\begin{array}{ccccc}
1 & 0 & 0 & 0 & 0\\
0 & 0 & 0 & 0 & 1\\
0 & 0 & 0 & 1 & 0\\
0 & 0 & 1 & 0 & 0\\
0 & 1 & 0 & 0 & 0
\end{array}
\right)
+
\left(
\begin{array}{ccccc}
0 & 0 & 0 & 0 & 1\\
0 & 1 & 0 & 0 & 0\\
0 & 0 & 1 & 0 & 0\\
0 & 0 & 0 & 1 & 0\\
0 & 0 & 0 & 0 & 1
\end{array}
\right)}}
\\
 &
 ={\scriptsize{\left(
\begin{array}{ccccc}
3 & 0 & 0 & 0 & 1\\
1 & 2 & 0 & 0 & 1\\
0 & 1 & 1 & 1 & 1\\
0 & 0 & 2 & 1 & 1\\
0 & 1 & 1 & 2 & 0
\end{array}
\right)
 =\left(
\begin{array}{ccccc}
3 & 0 & 0 & 0 & 0\\
0 & 2 & 0 & 0 & 0\\
0 & 0 & 1 & 0 & 0\\
0 & 0 & 0 & 0 & 0\\
0 & 0 & 0 & 0 & -1
\end{array}
\right)
+\left(
\begin{array}{ccccc}
0 & 0 & 0 & 0 & 1\\
0 & 1 & 0 & 0 & 0\\
0 & 0 & 1 & 0 & 0\\
0 & 0 & 0 & 1 & 0\\
0 & 0 & 0 & 0 & 1
\end{array}
\right)
+\left(
\begin{array}{ccccc}
0 & 0 & 0 & 0 & 0\\
0 & 0 & 0 & 0 & 1\\
0 & 0 & 0 & 1 & 1\\
0 & 0 & 1 & 1 & 1\\
0 & 1 & 1 & 1 & 1
\end{array}
\right)}}\\
&
=D_5+C_5+T'_5.
\end{align*}
Concerning
the eigenvalues, it is readily checked  the vectors $(1,1,\ldots,1)^{\top}$  and $(-1,\ldots,-1,$ $n-1)^{\top}$ are eigenvectors  of $B'_n$ with eigenvalues $n-1$ and $-1$ respectively.
Alternatively, to prove that $-1\in \spec B'_n$, we can also check that $\det (-I-B'_n)=(-1)^n det (I+B'_n)=0$. Note that this holds since $n$ times the last column of $I+B'_n$ is the sum of the other columns. For instance, for $n=5$, we get
$$
B_5'+I=\left(
\begin{array}{ccccc}
4 & 0 & 0 & 0 & 1\\
1 & 3 & 0 & 0 & 1\\
0 & 1 & 2 & 1 & 1\\
0 & 0 & 2 & 2 & 1\\
0 & 1 & 1 & 2 & 1
\end{array}
\right).
$$
Finally, $n-3$ is an eigenvalue of $B'_n$ since.the first two rows of $(n-3)I-B'_n$ are equal. Namely,
$(-1,0,\ldots,0,-1)$.
\end{proof}

In fact, the first statments of Propositions \ref{propo1} and \ref{propo2} are particular cases of the following result. Let ${\cal PR}=PR_n^{n_1,\ldots,n_r}$ denote the set of permutations with repetitions of $r$ symbols $a,b,\ldots$, where $a$ is repeated $n_1$ times, $b$ is repeated $n_2$ times, etc. Thus, $|{\cal PR}|=\frac{n!}{n_1!\cdots n_r!}$.

\begin{theorem}
\label{th:sigma-tau}
Let $\Gamma=\Cay(G,S)$ be a Cayley digraph, where $G$ is a subgroup of the symmetric group $Sym(n)$ and its generating set $S$ is given by the permutations $\{\pi_1,\pi_2,\ldots,\pi_k\}$. For any partition $n_1+n_2+\cdots+n_r=n$, there is a regular partition of $\Gamma$ with quotient matrix $B$ indexed by  the elements of ${\cal PR}$, and for every $\sigma,\tau\in {\cal PR}$ the entry $(B)_{\sigma\tau}$ is the number (possibly zero) of permutations in $S$ that, acting on the symbol positions, map $\sigma$ into $\tau$.
\end{theorem}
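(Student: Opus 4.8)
The plan is to exhibit the cells explicitly as the fibres of a colouring map, and then reduce regularity to a single equivariance identity. First I would fix a splitting of the ground set $[n]$ into blocks $N_1,\ldots,N_r$ with $|N_s|=n_s$, and associate to each block the corresponding repeated symbol of ${\cal PR}$. To a vertex $g\in G$ (written in one-line notation $g=g(1)g(2)\cdots g(n)$) I attach the word $\Phi(g)\in{\cal PR}$ whose $p$-th letter is the symbol of the block containing $g(p)$; in words, $\Phi(g)$ records, position by position, to which symbol-class the content belongs. Since $g$ is a bijection, $\Phi(g)$ has exactly $n_s$ letters of each type, so $\Phi(g)\in{\cal PR}$, and the cells of the proposed partition are the nonempty fibres $V_\sigma=\Phi^{-1}(\sigma)\cap V(\Gamma)$, $\sigma\in{\cal PR}$. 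Two vertices lie in the same cell precisely when they differ by left multiplication by a block-preserving permutation, that is, an element of the Young subgroup $Y=\Sym(N_1)\times\cdots\times\Sym(N_r)$; thus the cells are the cosets $Yg$, and $|{\cal PR}|=[\,\Sym(n):Y\,]=n!/(n_1!\cdots n_r!)$ counts them. Taking the partition $n=1+(n-1)$ recovers Proposition \ref{propo1}, where $\Phi(g)$ only marks the position of the single distinguished symbol.

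The heart of the argument is that $\Phi$ is equivariant for the action of $S$ on positions. With the convention that the arcs of $\Cay(G,S)$ join $u$ to $us$ (so that each generator acts on the string by permuting its positions, which is the convention forced by the pancake and mixed-graph adjacencies), one computes directly that
$$
\Phi(us)(p)=\mathrm{color}\big((us)(p)\big)=\mathrm{color}\big(u(s(p))\big)=\Phi(u)(s(p)).
$$
Hence $\Phi(us)$ is obtained from $\sigma=\Phi(u)$ by the position-action of $s$ alone, and the decisive point is that the right-hand side depends only on $\sigma$ and on $s$, not on the particular vertex $u\in V_\sigma$. Consequently, for every $u\in V_\sigma$ and every $\tau\in{\cal PR}$,
$$
|\Gamma^+(u)\cap V_\tau|=\#\{\,s\in S:\ s\ \text{maps}\ \sigma\ \text{to}\ \tau\,\},
$$
a quantity independent of $u$. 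This is exactly the regularity condition, and the displayed count is the asserted entry $(B)_{\sigma\tau}$. I would also record that distinct generators yield distinct neighbours by cancellation, so several generators sending $\sigma$ to the same $\tau$ contribute several distinct arcs into $V_\tau$; this is why the entry is a multiplicity rather than a $0/1$ indicator.

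It then remains to dispatch two routine points. For nonemptiness, when $G=\Sym(n)$ the map $\Phi$ is onto ${\cal PR}$, so every cell is occupied; for a proper subgroup $G$ one simply indexes $B$ by those $\sigma$ with $V_\sigma\neq\emptyset$, and the equivariance identity shows this index set is closed under right multiplication by $S$, so no arc can leave it. For specialization, setting $n=1+(n-1)$ and reading off the entries recovers $B=\sum_i P(\pi_i)$ of Propositions \ref{propo1} and \ref{propo2}, since ``$s$ maps the word with the marked letter in position $i$ to the word with it in position $j$'' says precisely $s(i)=j$. I expect the only genuine obstacle to be bookkeeping: pinning down the action conventions (positions versus values, left versus right multiplication) so that the induced transformation $\sigma\mapsto\tau$ is read off in the same direction as the permutation matrices $P(\pi_i)$, thereby matching the combinatorial description of $(B)_{\sigma\tau}$ with the earlier propositions. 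Once the identity $\Phi(us)(p)=\Phi(u)(s(p))$ is in place, everything else is formal.
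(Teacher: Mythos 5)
Your proof is correct and takes essentially the same route as the paper's: your colouring map $\Phi$ is exactly the paper's $\phi\cdot g$, and your equivariance identity $\Phi(us)(p)=\Phi(u)(s(p))$ is precisely the paper's key computation \eqref{phi-pi-g} (the paper writes the neighbour as $\pi g$ only because it reads composition left to right, so its $\pi g$ is your $u\circ s$). Your additional observations --- cells as cosets of the Young subgroup, restriction of the index set to nonempty cells for proper subgroups $G$, and the specialization recovering Proposition \ref{propo1} --- are consistent with the paper and, in the coset case, anticipate its Lemma \ref{th:sigma-tau-sp}.
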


\begin{proof}
Given the partition $n_1+n_2+\cdots+n_r=n$, we define the onto mapping $\phi:[n]\rightarrow \{a_1,\ldots a_r\}$ such that $|\phi^{-1}(a_i)|=n_i$, for $i=1,\ldots,r$. Given a permutation $\pi\in Sym(n)$, let $\phi\cdot \pi$ be the permutation with repetition in
${\cal PR}$ with $i$-th symbol $(\phi\cdot \pi)(i)=\phi(\pi(i))$, for $i=1,\ldots,n$. If we let $\pi$ to act on the symbol positions (composition of permutations $gh$ being read from left to right), then we can also define the permutation with repetition $\pi\cdot \phi$ such that $(\pi\cdot \phi)(i)=\pi(\phi(i))=\phi(\pi(i))$ for $i=1,\ldots,n$ and, hence, $\phi\cdot \pi=\pi\cdot \phi$. Also, it is clear that, for any $g,h\in G$, 
$(g h)\cdot\phi=g\cdot (h\cdot \phi)$. Let $\phi(G)$ be the set of distinct permutations in ${\cal PR}$ of the form $\phi\cdot g$ for some $g\in G$. Now we claim that $\Gamma$ has a regular partition $\phi^*$, where each class $V_{\sigma}$ is represented by an element $\sigma\in  \phi(G)$. More precisely, $V_{\sigma}=\{g\in G : \phi\cdot g=\sigma\}$. Indeed, if $\phi\cdot g= \phi\cdot g'$ and $g \rightarrow \pi g$ for some $\pi\in S$, we have
\begin{align}
\phi\cdot (\pi g)& =(\pi g)\cdot \phi=\pi\cdot (g\cdot \phi)=
\pi\cdot (\phi \cdot g ) \label{phi-pi-g}\\
 &=
\pi\cdot (\phi \cdot g' )=\pi\cdot (g'\cdot \phi)=(\pi g')\cdot \phi
=\phi\cdot (\pi g'). \nonumber
\end{align}
Thus, $\phi$ can be interpreted as a homomorphism from $\G$ to its quotient digraph $\phi^*(\G)$ that preserves the `colors' (generators) of the arcs.
The corresponding quotient matrix $B$ is then indexed by  elements of $\phi(G)\subset {\cal PR}$, with entries  $(B)_{\sigma\tau}$ for every $\sigma,\tau\in {\cal PR}$, as claimed.
\end{proof}

\begin{example*}[\textbf{Pancake graph} $P(4)$]
Consider the pancake graph $P(4)$. In this case, we have the following partitions: $4$, $3+1$, $2+2$, $2+1+1$, and $1+1+1+1$. According to Theorem \ref{th:sigma-tau}, these partitions yield the regular partitions of $P(4)$ in Figure \ref{fig:particionsP(4)}, with number of classes $PR_4^4=1$, $PR_4^{3,1}=4$, $PR_4^{2,2}=6$, and $PR_4^{2,1,1}=12$, respectively. Note that the classes are identified with the corresponding permutations with repetition of the symbols $a,b,c$. Besides, observe that the case of the previous example of $P(4)$ corresponds to the partition $3+1$.
Note that the graph associated with the partition $1+1+1+1$ is the whole graph $P(4)$, with number of classes (that is, number of vertices) $PR_4^{1,1,1,1}=24$ (see Figure \ref{fig:pancakes}$(c)$).
\end{example*}

\begin{figure}[t]
	\begin{center}
		\includegraphics[width=14cm]{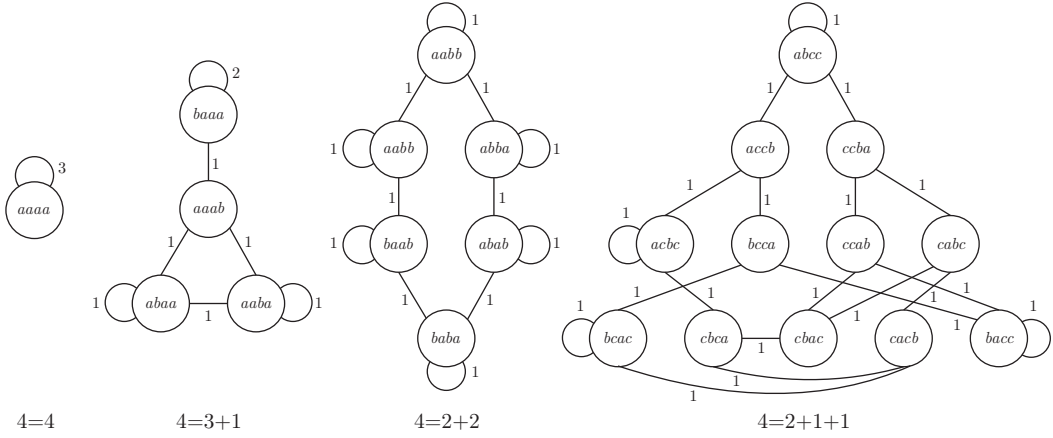}
	\end{center}
	\vskip-5.5cm
	\caption{The regular partitions of $P(4)$ corresponding to the number partitions of 4.}
	\label{fig:particionsP(4)}
\end{figure}

\begin{example*}[\textbf{New mixed graph} $\Gamma(3,3,2)$]
	Consider the new mixed graph $\Gamma(3,3,2)$. We have the partitions $4$, $3+1$, $2+2$, $2+1+1$, and $1+1+1+1$ again. These partitions give the regular partitions of $\Gamma(3,3,2)$ in Figure \ref{fig:particionsGamma(3,3,2)}, with the same number of classes that in the previous example of $P(4)$. The classes are again identified with the corresponding permutations with repetition of the symbols $a,b,c$. Note that the case of the previous example of $\Gamma(3,3,2)$ corresponds to the partition $3+1$.
	The graph associated with the partition $1+1+1+1$ is the whole graph $\Gamma(3,3,2)$ on 24 vertices (see Figure \ref{fig:nous-mixed}$(b)$).
\end{example*}

\begin{figure}[t]
	\begin{center}
		\includegraphics[width=14cm]{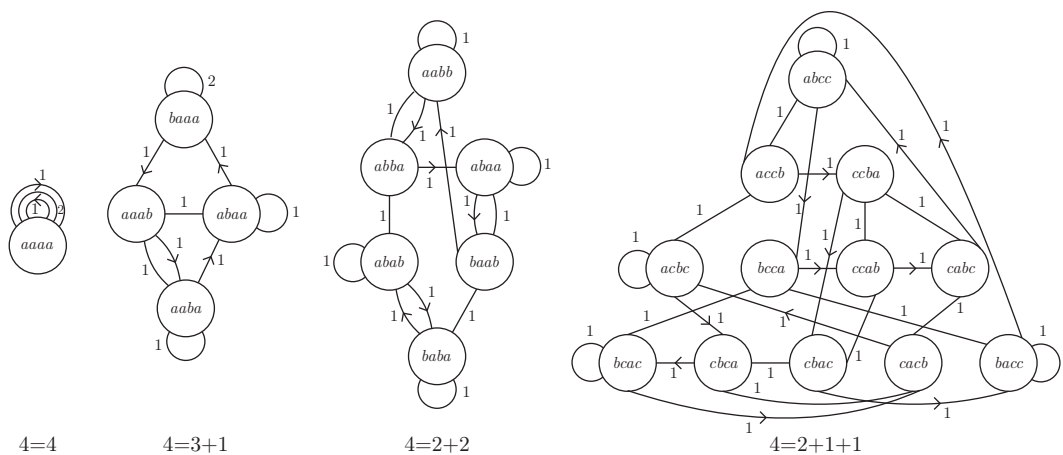}
	\end{center}
	\vskip-5cm
	\caption{The regular partitions of $\Gamma(3,3,2)$ corresponding to the number partitions of 4.}
	\label{fig:particionsGamma(3,3,2)}
\end{figure}


\section{The spectra  of quotient digraphs}
\label{sec:digrafs-quocient}
In the previous section, we found some eigenvalues, eigenvectors, and regular partitions for whole families of digraphs. In this section, we give 
a method 
to find the whole spectrum of a Cayley digraph (on a permutation group) and their quotient digraphs associated with the corresponding partitions. We again illustrate the obtained results by using the previous examples: the pancake graph $P(4)$ and the new mixed graph $\Gamma(3,3,2)$.

Our results are based on the following lemma, which allows us to apply Theorem \ref{theo-sp2}.
\begin{lemma}
\label{th:sigma-tau-sp}
Let $\Gamma=\Cay(G,S)$ be a Cayley digraph, where $G$ is a subgroup of the symmetric group $Sym(n)$ with generating set $S=\{\pi_1,\pi_2,\ldots,\pi_k\}$. For a given partition $n_1+n_2+\cdots+n_r=n$ induced by the mapping $\phi$, the quotient digraph $\phi^*(\G)$ is isomorphic to the relative lift
$\Gamma^{\beta}$ with base digraph a singleton with arcs $a_1,\ldots,a_k$, group $G$, relative voltage assignment  $\beta$  defined by $\beta(a_i)=\pi_i$ for $i=1,\ldots,k$, and stabilizer subgroup 
$$
H=\Stab_G(V_1)\cap \cdots \cap \Stab_G(V_r),
$$
where $V_1\cup\cdots \cup V_r$ is a partition of $[n]$ with $|V_i|=n_i$ for $i=1,\ldots,r$.
\end{lemma}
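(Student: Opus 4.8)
The plan is to exhibit an explicit, color-preserving digraph isomorphism between the quotient $\phi^*(\Gamma)$ produced in Theorem \ref{th:sigma-tau} and the relative lift $\Gamma^{\beta}$, and then simply read off the hypotheses needed to invoke Theorem \ref{theo-sp2}. The backbone is an orbit--stabilizer argument. Let $G$ act on the set ${\cal PR}$ of colorings (permutations with repetition) by $\pi\cdot\sigma:=\phi\cdot(\pi g)$ whenever $\sigma=\phi\cdot g$; the chain of equalities in \eqref{phi-pi-g} shows both that this is well defined (independent of the representative $g$) and, reading its middle steps, that it is a genuine left action. The orbit of $\phi$ under this action is exactly $\phi(G)$, the index set of the cells $V_\sigma$, so the cells will be placed in bijection with the cosets $G/H$ for $H=\Stab_G(\phi)$.

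First I would identify this stabilizer. Since $(\phi\cdot g)(i)=\phi(g(i))$, one has $\phi\cdot g=\phi$ if and only if $g(i)$ lies in the same block as $i$ for every $i$, that is, if and only if $g$ fixes each set $V_j$ setwise; hence $\Stab_G(\phi)=\Stab_G(V_1)\cap\cdots\cap\Stab_G(V_r)=H$, exactly the subgroup in the statement. By orbit--stabilizer the number of cells equals $|\phi(G)|=[G:H]$, which is the index playing the role of $n$ in Theorem \ref{theo-sp2} (not to be confused with the degree of $\Sym(n)$). Under this identification the cell $V_\sigma$ with $\sigma=\phi\cdot g_0$ is precisely the left coset $g_0H$, because $\phi\cdot g=\phi\cdot g_0$ if and only if $g_0^{-1}g\in H$.

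Next I would match the arcs. In the base bouquet there is one loop $a_i$ per generator $\pi_i$, so the relative lift carries, for each coset $J$, exactly one arc $(a_i,J)$ from $(u,J)$ to $(u,\beta(a_i)J)=(u,\pi_iJ)$. On the quotient side, the Cayley arc $g_0\to\pi_i g_0$ sends the cell $g_0H$ to the cell containing $\pi_i g_0$, namely $\pi_i g_0 H=\beta(a_i)(g_0H)$, and by \eqref{phi-pi-g} this target depends only on the cell. Thus the bijection $g_0H\mapsto V_{\phi\cdot g_0}$ carries the arc colored $a_i$ leaving $(u,J)$ to the arc colored $\pi_i$ leaving $V_\sigma$, preserving heads and colors, and hence multiplicities: both the lifted count and the quotient entry $(B)_{\sigma\tau}$ count the generators $\pi_i$ mapping $\sigma$ to $\tau$. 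This is the asserted isomorphism, and since the $1\times1$ base matrix of $(\Gamma,\beta)$ is the single group-algebra element $\sum_i\pi_i\in\mathbb{C}(G)$, Theorem \ref{theo-sp2} then applies verbatim.

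The delicate point, and the one I would treat most carefully, is the bookkeeping of the composition convention. The relative lift of Section \ref{sec:sp} propagates voltages by left multiplication on left cosets, $J\mapsto\beta(a)J$, and left multiplication is well defined on left cosets for \emph{any} subgroup $H$; had the cells instead turned out to be right cosets $Hg_0$, the candidate map $Hg_0\mapsto H\pi_i g_0$ would fail to be well defined unless $H$ were normal, which it is not in general. The left-to-right composition used throughout—making $\pi\cdot\sigma$ a left action and translating the Cayley arc $g\to\pi g$ into $g_0H\to\pi_i g_0H$—is exactly what renders the cells left cosets and the correspondence functorial. Verifying this alignment is the crux; once it is pinned down, the isomorphism, and with it the reduction to Theorem \ref{theo-sp2}, is immediate.
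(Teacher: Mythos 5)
Your proof is correct and follows essentially the same route as the paper's: you identify the cells $V_\sigma$ with the left cosets of $H=\Stab_G(V_1)\cap\cdots\cap\Stab_G(V_r)$ via $g\mapsto\phi\cdot g$ and match the generator-colored arcs using \eqref{phi-pi-g}, exactly as in the published argument. Your extra checks (that $\Stab_G(\phi)$ equals the intersection of setwise stabilizers, and that the left-coset/left-multiplication convention is what keeps the correspondence well defined for non-normal $H$) merely make explicit what the paper leaves implicit.
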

\begin{proof}
Let $\phi$ be the mapping defined in the proof of Theorem \ref{th:sigma-tau}.
Let $e$ be the identity element of $G$, and assume that $\phi\cdot e$ is the permutation with repetition $\varepsilon=a_1\stackrel{(n_1)}{\dots} a_1\ldots a_r\stackrel{(n_r)}{\dots} a_r$. Then, $H$ is constituted by all the elements $h\in G$ such that $\phi\cdot h=\varepsilon$ and, in general, each left coset of $H$ is of the form
	$$
	gH=\{h\in G : \phi\cdot h =\sigma\}\quad\mbox{if}\quad \phi\cdot g=\sigma.
	$$
Thus,  $gH$ corresponds to the class, or vertex of $\phi^*(\G)$,  $V_{\sigma}=\{g\in G : \phi\cdot g=\sigma\}$ with $\sigma\in \phi(G)$. Moreover, $V_{\sigma}$ is adjacent to $V_{\tau}$ through an arc with `color' $\pi\in S$ if $\tau=\phi\cdot(\pi g)= \pi\cdot \sigma$ (where the second equality comes from \eqref{phi-pi-g}). Consequently, in the relative lift $\G^{\beta}$, the vertex $gH$ is adjacent, through the arc with `color' $\pi$, to $\pi g H$. This proves the claimed isomorphism.
\end{proof}



\begin{example*}[\textbf{Pancake graph} $P(4)$]
Consider the case of the pancake graph $P(4)$ again.
We begin computing the spectrum of the whole graph by using Theorem \ref{theo-sp}.
We obtained from SageMath the matrices of the irreducible representations of $S_4$, that is, $\rho_1$ (partition $4=4$), $\rho_2$ (partition $4=3+1$),  $\rho_3$ (partition $4=2+2$),  $\rho_4$ (partition $4=2+1+1$), and  $\rho_5$ (partition $4=1+1+1+1$) shown in Table \ref{taula:S_4_matrius_irr}, related to the permutations $a=1243$, $b=1432$, and $c=4321$. Then, from Theorem \ref{theo-sp}, the spectrum of $P(4)$ is the union of the following spectra
(Note that the dimension of the matrices gives the multiplicities of the corresponding eigenvalues).
\begin{enumerate}
\item[$(i)$]
$1\cdot \spec \rho_1(B)=\{[3]^1\}$,
\item[$(ii)$]
$3\cdot \spec \rho_2(B)=\{[2]^3, [0]^3, [-1]^3\}$,
\item[$(iii)$]
$2\cdot \spec \rho_3(B)=\{[2]^2, [0]^2\}$,
\item[$(iv)$]
$3\cdot \spec \rho_4(B)=\{[\frac{-1+\sqrt{17}}{2}]^3, [-2]^3, \frac{-1-\sqrt{17}}{2}]^3\}$,
\item[$(v)$]
$1\cdot \spec \rho_5(B)=\{[-1]^1\}$,
\end{enumerate}
giving $\spec(P_4)$ as claimed in \eqref{sp(P4)}.

\begin{table}[h!]
	\small
	\medskip\noindent
	\begin{center}
		\begin{tabular}{|cccc|}
			\hline
			\multicolumn{1}{|c|}{Partition:} & $a$: & $b$: & $c$: \\
			\multicolumn{1}{|c|}{4=4} & (1) & (1) & (1)     \\[0.2cm]
			\cline{1-1}
			$a+b+c$: & Dimension: & Eigenvalues: & Spectrum: \\
			(3) & 1 & 3 & $[3]^1$ \\
			\hline\hline
			\multicolumn{1}{|c|}{Partition:} & $a$: & $b$: & $c$: \\
			\multicolumn{1}{|c|}{4=3+1} &
			$\left( \begin{array}{ccc}
			1 & 0 & 0\\
			0 & 1 & 0\\
			-1 & 1 &-1
			\end{array}\right)$         &
			$\left( \begin{array}{ccc}
			1 & 0 & 0\\
			1 & -1 & 1\\
			0 & 0 & 1
			\end{array}\right)$        &
			$\left( \begin{array}{ccc}
			-1 & 1 & -1\\
			0 & 0 & -1\\
			0 & -1 & 0
			\end{array}\right)$ \\[0.7cm]
			\cline{1-1}
			$a+b+c$: & Dimension: & Eigenvalues: & Spectrum: \\
			$\left( \begin{array}{ccc}
			1 & 1 & -1\\
			1 & 0 & 0\\
			-1 & 0 & 0
			\end{array}\right)$  &  3 & $2,0,-1$ & $[2]^3,[0]^3,[-1]^3$\\
			\hline\hline
			\multicolumn{1}{|c|}{Partition:} & $a$: & $b$: & $c$: \\
			\multicolumn{1}{|c|}{4=2+2} &
			$\left( \begin{array}{cc}
			1 & 0 \\
			1 & -1
			\end{array}\right)$         &
			$\left( \begin{array}{cc}
			-1 & 1 \\
			0 & 1
			\end{array}\right)$        &
			$\left( \begin{array}{cc}
			1 & 0 \\
			0 & 1
			\end{array}\right)$ \\[0.7cm]
			\cline{1-1}
			$a+b+c$: & Dimension: & Eigenvalues: & Spectrum: \\	
			$\left( \begin{array}{cc}
			1 & 1 \\
			1 & 1
			\end{array}\right)$      &  2 & $2,0$ & $[2]^2,[0]^2$\\
			\hline\hline
			\multicolumn{1}{|c|}{Partition:} & $a$: & $b$: & $c$: \\
			\multicolumn{1}{|c|}{4=2+1+1} &
			$\left( \begin{array}{ccc}
			1 & 0 & 0\\
			1 & -1 & 0\\
			1 & 0 & -1
			\end{array}\right)$         &
			$\left( \begin{array}{ccc}
			-1 & 1 & 0\\
			0 & 1 & 0\\
			0 & 1 & -1
			\end{array}\right)$        &
			$\left( \begin{array}{ccc}
			0 & 1 & -1\\
			1 & 0 & -1\\
			0 & 0 & -1
			\end{array}\right)$ \\[0.7cm]
			\cline{1-1}
			$a+b+c$: & Dimension: & Eigenvalues: & Spectrum: \\	
			$\left( \begin{array}{ccc}
			0 & 2 & -1\\
			2 & 0 & -1\\
			1 & 1 & -3
			\end{array}\right)$ & 3 & $\frac{-1+\sqrt{17}}{2},-2,\frac{-1-\sqrt{17}}{2}$ & $[\frac{-1+\sqrt{17}}{2}]^3,[-2]^3,[\frac{-1-\sqrt{17}}{2}]^3$     \\
			\hline\hline
			\multicolumn{1}{|c|}{Partition:} & $a$: & $b$: & $c$: \\
			\multicolumn{1}{|c|}{4=1+1+1+1} & (-1)        & (-1)        & (1) \\[0.2cm]
			\cline{1-1}
			Sum: & Dimension: & Eigenvalues: & Spectrum: \\
			(-1) & 1 & -1 & $[-1]^1$  \\
			\hline
		\end{tabular}
		\caption{The irreducible matrices of $P(4)$, their sum, and their corresponding eigenvalues.}
		\label{taula:S_4_matrius_irr}
	\end{center}
\end{table}
Now to find the spectra of the different quotient graphs, induced from each partition, from Theorem \ref{theo-sp2} and Lemma \ref{th:sigma-tau-sp}, we need to know the ranks of $\rho_i(H_j)$ for all group stabilizers $H_j$. With this aim, we use the matrices of all irreducible representation shown in Table \ref{taula:S_4_permutacions}.
\begin{table}[h!]
	\small
	\noindent
	\begin{center}
		\begin{tabular}{|c|c|c|c|c|c|c|}
			\hline
			Permu- & Cyclic   & $4$       &  $3+1$    & $2+2$    & $2+1+1$   & $1+\cdots+1$\\
			tation & Notation & $\rho_1$  &  $\rho_2$ & $\rho_3$ &  $\rho_4$        &  $\rho_5$         \\
			\hline\hline
			1234        & $e$      & 1     &
			$\tiny{\left(\begin{array}{ccc}
				1 & 0 & 0\\
				0 & 1 & 0\\
				0 & 0 & 1
				\end{array}\right)}$ &
			$\tiny{\left( \begin{array}{cc}
				1 & 0\\
				0 & 1\\
				\end{array}\right)}$ &
			$\tiny{\left( \begin{array}{ccc}
				1 & 0 & 0\\
				0 & 1 & 0\\
				0 & 0 & 1
				\end{array}\right)}$ & 1\\
			\hline
			2134        & $(12)$      & 1     &
			$\tiny{\left( \begin{array}{ccc}
				0 & -1 & 0\\
				-1 & 0 & 0\\
				1 & 0 & 0
				\end{array}\right)}$ &
			$\tiny{\left( \begin{array}{cc}
				1 & 0\\
				1 & -1\\
				\end{array}\right)}$ &
			$\tiny{\left(\begin{array}{ccc}
				-1 & 0 & 0\\
				0 & 0 & -1\\
				0 & -1 & 0
				\end{array}\right)}$ & -1\\
			\hline
			3124        & $(132)$      & 1     &
			$\tiny{\left( \begin{array}{ccc}
				0 & -1 & 0\\
				0 & 0 & -1\\
				1 & 0 & 0
				\end{array}\right)}$ &
			$\tiny{\left( \begin{array}{cc}
				-1 & 1\\
				-1 & 0\\
				\end{array}\right)}$ &
			$\tiny{\left(\begin{array}{ccc}
				0 & 0 & 1\\
				1 & 0 & 0\\
				0 & 1 & 0
				\end{array}\right)}$ & 1\\
			\hline
			1324        & $(23)$      & 1     &
			$\tiny{\left( \begin{array}{ccc}
				1 & 0 & 0\\
				0 & 0 & -1\\
				0 & -1 & 0
				\end{array}\right)}$ &
			$\tiny{\left( \begin{array}{cc}
				0 & -1\\
				-1 & 0\\
				\end{array}\right)}$ &
			$\tiny{\left(\begin{array}{ccc}
				0 & -1 & 0\\
				-1 & 0 & -1\\
				0 & 0 & -1
				\end{array}\right)}$ & -1\\
			\hline
			2314        & $(123)$      & 1     &
			$\tiny{\left( \begin{array}{ccc}
				0 & 0 & 1\\
				-1 & 0 & 0\\
				0 & -1 & 0
				\end{array}\right)}$ &
			$\tiny{\left( \begin{array}{cc}
				0 & -1\\
				1 & -1\\
				\end{array}\right)}$ &
			$\tiny{\left(\begin{array}{ccc}
				0 & 1 & 0\\
				0 & 0 & 1\\
				1 & 0 & 0
				\end{array}\right)}$ & 1\\
			\hline
			3214        & $(13)$      & 1     &
			$\tiny{\left( \begin{array}{ccc}
				0 & 0 & 1\\
				0 & 1 & 0\\
				1 & 0 & 0
				\end{array}\right)}$ &
			$\tiny{\left( \begin{array}{cc}
				-1 & 1\\
				0 & 1\\
				\end{array}\right)}$ &
			$\tiny{\left(\begin{array}{ccc}
				0 & 0 & -1\\
				0 & -1 & 0\\
				-1 & 0 & 0
				\end{array}\right)}$ & -1\\
			\hline
			3241        & $(134)$      & 1     &
			$\tiny{\left( \begin{array}{ccc}
				-1 & 1 & -1\\
				0 & 1 & 0\\
				1 & 0 & 0
				\end{array}\right)}$ &
			$\tiny{\left( \begin{array}{cc}
				0 & -1\\
				1 & -1\\
				\end{array}\right)}$ &
			$\tiny{\left(\begin{array}{ccc}
				-1 & 0 & 1\\
				-1 & 1 & 0\\
				-1 & 0 & 0
				\end{array}\right)}$ & 1\\
			\hline
			2341        & $(1234)$      & 1     &
			$\tiny{\left( \begin{array}{ccc}
				-1 & 1 & -1\\
				-1 & 0 & 0\\
				0 & -1 & 0
				\end{array}\right)}$ &
			$\tiny{\left( \begin{array}{cc}
				-1 & 1\\
				0 & 1\\
				\end{array}\right)}$ &
			$\tiny{\left(\begin{array}{ccc}
				1 & -1 & 0\\
				1 & 0 & -1\\
				1 & 0 & 0
				\end{array}\right)}$ & -1\\
			\hline
			4321        & $(14)(23)$      & 1     &
			$\tiny{\left( \begin{array}{ccc}
				-1 & 1 & -1\\
				0 & 0 & -1\\
				0 & -1 & 0
				\end{array}\right)}$ &
			$\tiny{\left( \begin{array}{cc}
				1 & 0\\
				0 & 1\\
				\end{array}\right)}$ &
			$\tiny{\left(\begin{array}{ccc}
				0 & 1 & -1\\
				1 & 0 & -1\\
				0 & 0 & -1
				\end{array}\right)}$ & 1\\
			\hline
			3421        & $(1324)$      & 1     &
			$\tiny{\left( \begin{array}{ccc}
				-1 & 1 & -1\\
				0 & 0 & -1\\
				1 & 0 & 0
				\end{array}\right)}$ &
			$\tiny{\left( \begin{array}{cc}
				1 & 0\\
				1 & -1\\
				\end{array}\right)}$ &
			$\tiny{\left(\begin{array}{ccc}
				0 & 1 & -1\\
				-1 & 1 & 0\\
				0 & 1 & 0
				\end{array}\right)}$ & -1\\
			\hline
			2431        & $(124)$      & 1     &
			$\tiny{\left( \begin{array}{ccc}
				-1 & 1 & -1\\
				-1 & 0 & 0\\
				0 & 0 & 1
				\end{array}\right)}$ &
			$\tiny{\left( \begin{array}{cc}
				-1 & 1\\
				-1 & 0\\
				\end{array}\right)}$ &
			$\tiny{\left(\begin{array}{ccc}
				1 & -1 & 0\\
				0 & -1 & 1\\
				0 & -1 & 0
				\end{array}\right)}$ & 1\\
			\hline
			4231        & $(14)$      & 1     &
			$\tiny{\left( \begin{array}{ccc}
				-1 & 1 & -1\\
				0 & 1 & 0\\
				0 & 0 & 1
				\end{array}\right)}$ &
			$\tiny{\left( \begin{array}{cc}
				0 & -1\\
				-1 & 0\\
				\end{array}\right)}$ &
			$\tiny{\left(\begin{array}{ccc}
				-1 & 0 & 1\\
				0 & -1 & 1\\
				0 & 0 & 1
				\end{array}\right)}$ & -1\\
			\hline
			4132        & $(142)$      & 1     &
			$\tiny{\left(\begin{array}{ccc}
				0 & -1 & 0\\
				1 & -1 & 1\\
				0 & 0 & 1
				\end{array}\right)}$ &
			$\tiny{\left( \begin{array}{cc}
				0 & -1\\
				1 & -1\\
				\end{array}\right)}$ &
			$\tiny{\left( \begin{array}{ccc}
				1 & 0 & -1\\
				0 & 0 & -1\\
				0 & 1 & -1
				\end{array}\right)}$ & 1\\
			\hline
			1432        & $(24)$      & 1     &
			$\tiny{\left( \begin{array}{ccc}
				1 & 0 & 0\\
				1 & -1 & 1\\
				0 & 0 & 1
				\end{array}\right)}$ &
			$\tiny{\left( \begin{array}{cc}
				-1 & 1\\
				0 & 1\\
				\end{array}\right)}$ &
			$\tiny{\left(\begin{array}{ccc}
				-1 & 1 & 0\\
				0 & 1 & 0\\
				0 & 1 & -1
				\end{array}\right)}$ & -1\\
			\hline
			3412        & $(13)(24)$      & 1     &
			$\tiny{\left( \begin{array}{ccc}
				0 & 0 & 1\\
				1 & -1 & 1\\
				1 & 0 & 0
				\end{array}\right)}$ &
			$\tiny{\left( \begin{array}{cc}
				1 & 0\\
				0 & 1\\
				\end{array}\right)}$ &
			$\tiny{\left(\begin{array}{ccc}
				0 & -1 & 1\\
				0 & -1 & 0\\
				1 & -1 & 0
				\end{array}\right)}$ & 1\\
			\hline
			4312        & $(1423)$      & 1     &
			$\tiny{\left( \begin{array}{ccc}
				0 & 0 & 1\\
				1 & -1 & 0\\
				0 & -1 & 0
				\end{array}\right)}$ &
			$\tiny{\left( \begin{array}{cc}
				1 & 0\\
				1 & -1\\
				\end{array}\right)}$ &
			$\tiny{\left(\begin{array}{ccc}
				0 & -1 & 1\\
				0 & 0 & 1\\
				-1 & 0 & 1
				\end{array}\right)}$ & -1\\
			\hline
			1342        & $(234)$      & 1     &
			$\tiny{\left( \begin{array}{ccc}
				1 & 0 & 0\\
				1 & -1 & 1\\
				0 & -1 & 0
				\end{array}\right)}$ &
			$\tiny{\left( \begin{array}{cc}
				-1 & 1\\
				-1 & 0\\
				\end{array}\right)}$ &
			$\tiny{\left(\begin{array}{ccc}
				-1 & 1 & 0\\
				-1 & 0 & 0\\
				-1 & 0 & 1
				\end{array}\right)}$ & 1\\
			\hline
			3142        & $(1342)$      & 1     &
			$\tiny{\left( \begin{array}{ccc}
				0 & -1 & 0\\
				1 & -1 & 1\\
				1 & 0 & 0
				\end{array}\right)}$ &
			$\tiny{\left( \begin{array}{cc}
				0 & -1\\
				-1 & 0\\
				\end{array}\right)}$ &
			$\tiny{\left(\begin{array}{ccc}
				1 & 0 & -1\\
				1 & 0 & 0\\
				1 & -1 & 0
				\end{array}\right)}$ & -1\\
			\hline
			2143        & $(12)(34)$      & 1     &
			$\tiny{\left( \begin{array}{ccc}
				0 & -1 & 0\\
				-1 & 0 & 0\\
				-1 & 1 & -1
				\end{array}\right)}$ &
			$\tiny{\left( \begin{array}{cc}
				1 & 0\\
				0 & 1\\
				\end{array}\right)}$ &
			$\tiny{\left(\begin{array}{ccc}
				-1 & 0 & 0\\
				-1 & 0 & 1\\
				-1 & 1 & 0
				\end{array}\right)}$ & 1\\
			\hline
			1243        & $(34)$      & 1     &
			$\tiny{\left( \begin{array}{ccc}
				1 & 0 & 0\\
				0 & 1 & 0\\
				-1 & 1 & -1
				\end{array}\right)}$ &
			$\tiny{\left( \begin{array}{cc}
				1 & 0\\
				1 & -1\\
				\end{array}\right)}$ &
			$\tiny{\left(\begin{array}{ccc}
				1 & 0 & 0\\
				1 & -1 & 0\\
				1 & 0 & -1
				\end{array}\right)}$ & -1\\
			\hline
			4213        & $(143)$      & 1     &
			$\tiny{\left( \begin{array}{ccc}
				0 & 0 & 1\\
				0 & 1 & 0\\
				-1 & 1 & -1
				\end{array}\right)}$ &
			$\tiny{\left( \begin{array}{cc}
				-1 & 1\\
				-1 & 0\\
				\end{array}\right)}$ &
			$\tiny{\left(\begin{array}{ccc}
				0 & 0 & -1\\
				0 & 1 & -1\\
				1 & 0 & -1
				\end{array}\right)}$ & 1\\
			\hline
			2413        & $(1243)$      & 1     &
			$\tiny{\left( \begin{array}{ccc}
				0 & 0 & -1\\
				-1 & 0 & 0\\
				-1 & 1 & -1
				\end{array}\right)}$ &
			$\tiny{\left( \begin{array}{cc}
				0 & -1\\
				-1 & 0\\
				\end{array}\right)}$ &
			$\tiny{\left(\begin{array}{ccc}
				0 & 1 & 0\\
				0 & 1 & -1\\
				-1 & 1 & 0
				\end{array}\right)}$ & -1\\
			\hline
			1423        & $(243)$      & 1     &
			$\tiny{\left( \begin{array}{ccc}
				1 & 0 & 0\\
				0 & 0 & -1\\
				-1 & 1 & -1
				\end{array}\right)}$ &
			$\tiny{\left( \begin{array}{cc}
				0 & -1\\
				1 & -1\\
				\end{array}\right)}$ &
			$\tiny{\left(\begin{array}{ccc}
				0 & -1 & 0\\
				1 & -1 & 0\\
				0 & -1 & 1
				\end{array}\right)}$ & 1\\
			\hline
			4123        & $(1432)$      & 1     &
			$\tiny{\left( \begin{array}{ccc}
				0 & -1 & 0\\
				0 & 0 & -1\\
				-1 & 1 & -1
				\end{array}\right)}$ &
			$\tiny{\left( \begin{array}{cc}
				-1 & 1\\
				0 & 1\\
				\end{array}\right)}$ &
			$\tiny{\left(\begin{array}{ccc}
				0 & 0 & 1\\
				-1 & 0 & 1\\
				0 & -1 & 1
				\end{array}\right)}$ & -1\\
			\hline
		\end{tabular}
		\caption{The irreducible representations of the symmetric group $S_4$.}
		\label{taula:S_4_permutacions}
	\end{center}
\end{table}
This gives:
\begin{enumerate}
\item[$(i)$] $(4=4)$: \\
$H_1= \Stab_{S_4}(\{1,2,3,4\})=S_4$ and so $\rho_i(H_1)=\sum_{g\in S_4}\rho_i(g)$ for $i=1,\ldots,5$.
\item[$(ii)$] $(4=3+1)$: \\
$H_2= \Stab_{S_4}(\{1,2,3\})\cap \Stab_{S_4}(4)=S_3$ and so $\rho_i(H_2)=\sum_{g\in S_3}\rho_i(g)$ for $i=1,\ldots,5$.
\item[$(iii)$] $(4=2+2)$: \\
$H_3= \Stab_{S_4}(\{1,2\})\cap \Stab_{S_4}(\{3,4\})=\{e,(12),(34),(12)(34)\}$ and so $\rho_i(H_3)=\sum_{g\in H_3}\rho_i(g)$ for $i=1,\ldots,5$.
\item[$(iv)$] $(4=2+1+1)$: \\
$H_4= \Stab_{S_4}(\{1,2\})\cap \Stab_{S_4}(3)\cap \Stab_{S_4}(4)=\{e,(12)\}$ and so $\rho_i(H_4)=\rho_i(e)+\rho_i((12))$ for $i=1,\ldots,5$.
\item[$(v)$] $(4=1+1+1+1)$: \\
$H_5= \Stab_{S_4}(1)\cap \Stab_{S_4}(2)\cap \Stab_{S_4}(3)\cap \Stab_{S_4}(4)=\{e\}$ and so $\rho_i(H_5)=\rho_i(e)=d_{\rho_i}$ for $i=1,\ldots,5$.
\end{enumerate}
The ranks for the cases $(i)$-$(v)$, together with the corresponding spectra, are shown in Table \ref{taula:S_4_espectres}. Notice that, in the last but one row of the same table, we have the spectrum of the whole graph $P(4)$ again.

\begin{table}[h!]
	\scriptsize
	\smallskip\noindent
	\begin{center}
		\begin{tabular}{|c|c|c|c|c|c||c|}
			\hline
			& $4$      & $3+1$        & $2+2$     & $2+1+1$  & $1+1+1+1$ & Spectrum\\
			& $\rho_1$ & $\rho_2$     & $\rho_3$  & $\rho_4$ & $\rho_5$  &            \\
			\hline\hline
			rank$(\rho(H_1))$ & 1        & 0            & 0         & 0        & 0 & $\{[3]^1\}$\\
			\hline
			rank$(\rho(H_2))$ & 1        & 1            & 0         & 0        & 0 & $\{[3]^1,[2]^1,[0]^1,$\\
			                  &          &              &           &          &   &
			$[-1]^1\}$ \\
			\hline
			rank$(\rho(H_3))$ & 1        & 1            & 1         & 0        & 0 & $\{[3]^1,[2]^2,[0]^2,$\\
			                  &          &              &           &          &   &
			$[-1]^1\}$ \\
			\hline
			rank$(\rho(H_4))$ & 1        & 2            & 1         & 1        & 0 & $\{[3]^1,[2]^3,[0]^3,$\\
			                  &          &              &           &          &   &
			$[\frac{-1\pm\sqrt{17}}{2}]^1,$ \\
			&          &              &           &          &   &
			$[-1]^2,[-2]^1\}$ \\
			\hline
			rank$(\rho(H_5))$ & 1        & 3            & 2         & 3        & 1 &
			$\{[3]^1,[2]^5,[0]^5,$ \\
			                  &          &              &           &          &   &
			$[\frac{-1\pm\sqrt{17}}{2}]^3,$ \\
			                  &          &              &           &          &   &
			$[0]^5,[-1]^4,[-2]^3\}$ \\
			\hline\hline
			$\spec \rho(B)$   & $\{[3]^1\}$  & $\{[2]^1,[0]^1,[-1]^1\}$ & $\{[2]^1,[0]^1\}$ & $\{[\frac{-1\pm\sqrt{17}}{2}]^1,[-2]^1\}$ & $\{[-1]^1\}$ & \\
			\hline
		\end{tabular}
		\caption{Spectra of the quotient graphs of $P(4)$.}
		\label{taula:S_4_espectres}
	\end{center}
\end{table}
\end{example*}

\begin{example*}[\textbf{New mixed graph} $\Gamma(3,3,2)$]
Consider the mixed graph $\Gamma(3,3,2)$ again.
	Now, by using Theorem \ref{theo-sp}, the spectrum of the whole graph  is the union of the following spectra:
		\begin{enumerate}
			\item[$(i)$]
			$1\cdot \spec \rho_1(B)=\{[3]^1\}$,
			\item[$(ii)$]
			$3\cdot \spec \rho_2(B)=\{ [1]^1, [-1]^1\}$,
			\item[$(iii)$]
			$2\cdot \spec \rho_3(B)=\{[\pm \sqrt{3}]^1\}$,
			\item[$(iv)$]
			$3\cdot \spec \rho_4(B)=\{ [1]^2, [-1]^1\}$,
			\item[$(v)$]
			$1\cdot \spec \rho_5(B)=\{[-3]^1\}$,
		\end{enumerate}
which gives \eqref{eq:espectre-new-mixed}.
Then, the spectra of the different quotient graphs, induced from each partition, are again computed by using the ranks of $\rho_i(H_j)$ for the group stabilizers $H_j$, as in the previous example for $P(4)$. The obtained results are shown in Table \ref{taula:Gamma(3,3,2)_espectres}, where we indicate the spectrum of the whole new mixed graph in the last but one row.
\begin{table}[h!]
	\scriptsize
	\smallskip\noindent
	\begin{center}
		\begin{tabular}{|c|c|c|c|c|c||c|}
			\hline
			& $4$      & $3+1$        & $2+2$     & $2+1+1$  & $1+1+1+1$ & Spectrum\\
			& $\rho_1$ & $\rho_2$     & $\rho_3$  & $\rho_4$ & $\rho_5$  &            \\
			\hline\hline
			rank$(\rho(H_1))$ & 1        & 0            & 0         & 0        & 0 & $\{[3]^1\}$\\
			\hline
			rank$(\rho(H_2))$ & 1        & 1            & 0         & 0        & 0 & $\{[3]^1,[1]^2,[-1]^1\}$\\
			\hline
			rank$(\rho(H_3))$ & 1        & 1            & 1         & 0        & 0 & $\{[3]^1,[\pm\sqrt{3}]^1,[1]^2,[-1]^1\}$\\
			\hline
			rank$(\rho(H_4))$ & 1        & 2            & 1         & 1        & 0 & $\{[3]^1,[\pm\sqrt{3}]^1,[1]^5,[-1]^4\}$\\
			\hline
			rank$(\rho(H_5))$ & 1        & 3            & 2         & 3        & 1 &
			$\{[3]^1,[\pm\sqrt{3}]^2,[1]^9,[-1]^9,[-3]^1\}$ \\
			\hline\hline
			$\spec \rho(B)$   & $\{[3]^1\}$  & $\{[1]^1,[-1]^2\}$ & $\{[\pm\sqrt{3}]^1\}$ & $\{[1]^2,[-1]^1\}$ & $\{[-3]^1\}$ & \\
			\hline
		\end{tabular}
		\caption{Spectra of the quotient graphs of $\Gamma(3,3,2)$.}
		\label{taula:Gamma(3,3,2)_espectres}
	\end{center}
\end{table}
\end{example*}



\end{document}